\theoremstyle{definition}
\newtheorem{thm}{Theorem}[section]
\newtheorem{dfn}[thm]{Definition}
\newtheorem{lemma}[thm]{Lemma}
\newtheorem{cor}[thm]{Corollary}
\newtheorem{prop}[thm]{Proposition}
\newtheorem{rmk}[thm]{Remark}
\def\endo{\mathrm{End}}
\def\id{\mathrm{id}}
\def\rr{\mathbb{R}}
\def\clf{\mathcal{F}}
\def\clj{\mathcal{J}}
\def\clk{\mathcal{K}}
\def\clo{\mathcal{O}}
\def\clp{\mathcal{P}}
\def\clq{\mathcal{Q}}
\def\clr{\mathcal{R}}
\def\frg{\mathfrak{g}}
\def\pt{\partial}
\def\p{\partial}
\def\ud{\mathrm{d}}
\def\spann{\mathrm{Span}}
\def\bea{\begin{eqnarray}}
\def\eea{\end{eqnarray}}
\def\<{{\langle}}
\def\>{{\rangle}}
\begin{document}

\title{$\clf$-harmonic 3-forms and the Type IIA flow on 6-dimensional symplectic Lie algebras}
\author{Teng Fei\footnote{Work supported by NSF LEAPS-MPS Grant 2418918 and Simons Collaboration Grant 830816.}}

\date{}

\maketitle{}

\begin{abstract}
In this paper, we investigate the problem of $\clf$-harmonic forms and the long-time behavior of the Type IIA flow on certain nilpotent and solvable symplectic Lie algebra (and corresponding nilmanifolds and solvmanifolds). In particular, we demonstrate that the Type IIA flow in these cases are very useful to detect desired geometric structures including Lagrangian torus fibrations and harmonic almost complex structures.
\end{abstract}

\section{Introduction}

Given a 6-dimensional symplectic manifold $(M,\omega)$, from any 3-form $\varphi$ on $M$, one can canonically construct an endomorphism $\clk(\varphi)$ of $TM$, another 3-form $\clf(\varphi)$, and a scalar function $\clq(\varphi)$ \cite{fei2025}, where $\clk$, $\clf$, and $\clq$ are homogeneous polynomials in $\varphi$ of degree 2, 3, and 4 respectively. Many classical work in complex geometry and mathematical physics can be incorporated into this framework, such as \cite{hitchin2000, tomasiello2008, tseng2014, donaldson2024, donaldson2024b, donaldson2024c}.

In this paper, we would like to address two analytic problems related to the aforementioned construction, namely the problem of $\clf$-harmonic forms and the problem of solving the Type IIA flow.

As a nonlinear analogue of the standard Hodge theory, we say a 3-form $\varphi$ on $(M,\omega)$ is \emph{$\clf$-harmonic} if both $\ud\varphi=0$ and $\ud \clf(\varphi)=0$. A basic question is that how do we characterize the set of $\clf$-harmonic 3-forms. In particular, we would like to know if an $\clf$-harmonic representative always exist if its cohomology is fixed. Since we may work with a symplectic but non-K\"ahler manifold, it is necessary to consider both the de Rham cohomology and the symplectic cohomology \cite{tseng2012a, tseng2012b, tsai2016}.  

In a series of papers \cite{fei2021b, fei2023, fei2021g}, we introduce and investigate the Type IIA flow as an intrinsically defined geometric flow on symplectic 6-manifolds. In the language of \cite{fei2025}, the Type IIA flow on a fixed symplectic manifold $(M,\omega)$ is the following evolution equation of 3-forms
\bea
\pt_t\varphi=\ud\Lambda_\omega\ud \clf(\varphi),\label{iiaflow}
\eea
where $\Lambda_\omega$ is the Lefschetz operator of contraction with respect to the symplectic form $\omega$. Conventionally we require the initial data $\varphi\big|_{t=0}=\varphi_0$ to be a $\ud$-closed 3-form that belongs to the orbit $\clo_-^+(\mu)$ pointwise so we have the desired geometric interpretation as well as the short-time existence and uniqueness. However in principle, Equation (\ref{iiaflow}) makes sense for any initial data though additional effort is in need for its short-time existence. In this paper, we would like to explore the relationship between the long time behavior of (\ref{iiaflow}) and the underlying structure of $(M,\omega)$, in the context of arbitrary initial data. In particular, it encompasses the characterization of $\clf$-harmonic forms, as they constitute a special kind of stationary points of (\ref{iiaflow}).

In general, understanding the set of $\clf$-harmonic forms and the behavior of the Type IIA flow (\ref{iiaflow}) requires development of new powerful techniques in nonlinear analysis. As a preliminary attempt on these two problems, in this paper we would like to avoid such analytical difficulties by imposing a large symmetry on $(M,\omega)$. To be more specific, we would like to assume that $(M,\omega)$ is a 6-dimensional symplectic Lie group (or its quotient by discrete subgroups) with everything being left-invariant. To this effect, we can replace the standard de Rham and symplectic cohomology theories by the corresponding Lie algebra cohomology and its symplectic analogue. As a consequence, we can reduce the problem of $\clf$-harmonic forms to a collection of algebraic equations and the equation of Type IIA flow to an ODE system. 

For simplicity, we shall consider only those 6-dimensional symplectic Lie groups with cocompact discrete subgroups. It is well-known (see for example \cite{milnor1976}) that any Lie group admitting a cocompact discrete subgroup must be unimodular. By a theorem of Chu \cite{chu1974}, any unimodular symplectic Lie group must be solvable. Though the classification is not fully complete, the structure of 6-dimensional symplectic solvable Lie groups is well-understood from the works \cite{goze1987, goze1996, salamon2001, khakimdjanov2004, campoamor-stursberg2009, baues2016}. In particular, it contains many more examples compared to 6-dimensional Lie groups with left-invariant symplectic half-flat structures \cite{conti2007, fernandez2013}. In this paper, we do not attempt to exhaust all the possible cases. Instead, we present a more refined analysis of the two examples given in \cite[Section 9.3.2]{fei2021b}. Further cases will be left for future studies.

This paper is organized as follows. In Section 2, we review necessary backgrounds about 3-forms over a 6-dimensional space and various cohomology theories on a symplectic Lie algebra. In Section 3, we establish a number of formulae for calculation purpose. Section 4 is devoted to address the problem of $\clf$-harmonic forms and the long-time behavior of the Type IIA flow. We present one nilpotent example and one solvable example. From these explicit calculations we show that the case of nilpotent Lie algebra is drastically different from the case of solvable Lie algebra. Moreover we also learn that in these examples the Type IIA flow can be very useful in detecting desired geometric structures such as Lagrangian fibrations and harmonic almost complex structures.

\section{Backgrounds}

Let $G$ be a Lie group of real dimension $n$. All the left-invariant data on $G$ are determined by their value at the identity, hence they can be viewed as an element of the Lie algebra $\frg$ or other algebraic constructions built from $\frg$. After fixing a basis $\{e_1,e_2,\dots,e_n\}$ of $\frg$, the isomorphism class of $\frg$ is determined by the structure constants $\{c^k_{ij}\}_{1\leq i,j,k\leq n}$, which are defined by $[e_i,e_j]=c^k_{ij}e_k$. In many cases, it is more convenient to work with the dual Lie algebra $\frg^*$ and its exterior product 
\[\bigwedge\nolimits^*\frg^*=\bigoplus_{i=0}^n\bigwedge\nolimits^i\frg^*.\]
The structure of $\frg$ can be recovered from the exterior derivative $\ud$ defined on $\frg^*$, given by
\[\ud e^k=-\frac{1}{2}c^k_{ij}e^i\wedge e^j,\]
where $\{e^1,\dots,e^n\}$ and $c^k_{ij}$ are the dual basis and the structure constants associated to the basis $\{e_1,e_2,\dots,e_n\}$. Extending the above defined $\ud$ by linearity and the graded Leibniz rule, one gets a differential $\ud:\bigwedge^i\frg^*\to\bigwedge^{i+1}\frg^*$ satisfying $\ud^2=0$. The cohomology associated to the cochain complex $(\bigwedge^*\frg^*,\ud)$ is known as the \emph{Lie algebra cohomology} of $\frg$ \cite{chevalley1948}. In many cases, the Lie algebra cohomology of $\frg$ coincides with the de Rham cohomology of quotients of $G$ by its cocompact discrete subgroups, including the case when $G$ is compact or nilpotent \cite{nomizu1954}.

For the convenience of notation, we shall use a symbol like $(0,0,0,e^{15},0,e^{13})$ to denote the 6-dimensional Lie algebra $\frg$ defined by a basis $\{e^1,\dots,e^6\}$ of $\frg^*$ satisfying
\[\ud e^1=\ud e^2=\ud e^3=\ud e^5=0,\quad\ud e^4=e^1\wedge e^5,\quad\ud e^6=e^1\wedge e^3.\] 

We say $(G,\omega)$ is a \emph{symplectic Lie group} or $(\frg,\omega)$ is a \emph{symplectic Lie algebra} if $\omega\in\bigwedge^2\frg^*$ is a $\ud$-closed and non-degenerate 2-form on $\frg$. For a symplectic Lie group or a symplectic Lie algebra, its dimension must an even number $2m$. For any $0\leq k\leq m$, we can define the space of primitive $k$-forms as 
\[\clp^k\frg^*=\left\{\alpha\in\bigwedge\nolimits^k\frg^*:\omega^{m-k+1}\wedge\alpha=0\right\}.\]
We have the well-known Lefschetz decomposition
\[\bigwedge\nolimits^k\frg^*=\clp^k\frg^*\oplus\left(\omega\wedge\clp^{k-2}\frg^*\right)\oplus\left(\omega^2\wedge\clp^{k-4}\frg^*\right)\oplus\dots\]
and the Lefschetz isomorphism
\[\omega^{m-k}:\bigwedge\nolimits^k\frg^*\cong\bigwedge\nolimits^{2m-k}\frg^*.\]

Following Tseng-Yau \cite{tseng2012a, tseng2012b}, one can easily show that
\[\ud:\clp^k\frg^*\to\clp^{k+1}\frg^*\oplus\left(\omega\wedge\clp^{k-1}\frg^*\right),\]
therefore one decompose $\ud=\p_++\omega\wedge\p_-$, where $\p_\pm:\clp^k\frg^*\to\clp^{k\pm1}\frg^*$ are first order differential operators mapping primitive forms to primitive forms. The \emph{symplectic cohomology} $\textrm{SH}^*$of $(\frg,\omega)$ is defined to be the cohomology of the cochain complex
\[0\to\clp^0\frg^*\xrightarrow{\p_+}\clp^1\frg^*\xrightarrow{\p_+}\dots\xrightarrow{\p_+}\clp^m\frg^*\xrightarrow{\p_+\p_-}\clp^m\frg^*\xrightarrow{\p_-} \clp^{m-1}\frg^*\xrightarrow{\p_-}\dots\xrightarrow{\p_-}\clp^0\frg^*\to0.\]

In this paper, we are concerned with three kinds of cohomologies on $\frg$. They are the de Rham/Lie algebra cohomology group
\[\textrm{H}^m(\frg;\rr):=\frac{\ker\left(\ud:\bigwedge^m\frg^*\to\bigwedge^{m+1}\frg^*\right)}{\textrm{im} \left(\ud:\bigwedge^{m-1}\frg^*\to\bigwedge^{m}\frg^*\right)},\]
the primitive part of the de Rham/Lie algebra cohomology
\[\textrm{PH}^m(\frg;\rr):=\frac{\clp^m\frg^*\cap\ker\left(\ud:\bigwedge^m\frg^*\to\bigwedge^{m+1}\frg^*\right)}{\clp^m\frg^*\cap\textrm{im} \left(\ud:\bigwedge^{m-1}\frg^*\to\bigwedge^{m}\frg^*\right)},\]
and the symplectic cohomologies
\[\textrm{SH}_+^m(\frg;\rr):=\frac{\ker\left(\p_+\p_-:\clp^m\frg^*\to\clp^m\frg^*\right)}{\textrm{im}\left(\p_+:\clp^{m-1}\frg^*\to\clp^m\frg^*\right)}\]
and
\[\textrm{SH}_-^m(\frg;\rr):=\frac{\ker\left(\p_-:\clp^m\frg^*\to\clp^{m-1}\frg^*\right)}{\textrm{im}\left(\p_+\p_-:\clp^m\frg^*\to\clp^m\frg^*\right)}.\]
The Poincar\'e duality says the natural pairing $\wedge:\textrm{SH}_+^m(\frg;\rr)\otimes\textrm{SH}_-^m(\frg;\rr)\to\bigwedge^{2m}\frg^*$ is non-degenerate if $\frg$ is unimodular.\\
Since $\p_+\p_-=\ud\Lambda_\omega\ud$ and $\ker\p_-=\ker\ud\cap\clp^m\frg^*$, we naturally have the surjective map
\[\textrm{SH}_-^m(\frg;\rr)\twoheadrightarrow\textrm{PH}^m(\frg;\rr)\]
and the injective map
\[\textrm{PH}^m(\frg;\rr)\hookrightarrow\textrm{H}^m(\frg;\rr).\]
On the other hand, as $\textrm{im}~\ud\cap\clp^m\frg^*\subset\textrm{im}~\p_+$, we also have a natural map
\[\textrm{SH}_+^m(\frg;\rr)\to\textrm{PH}^m(\frg;\rr).\]
 
\section{Computational Formulae}

In this section, we establish some basic computational formulae for 3-forms in real dimension 6. We first review the basic constructions of the equivariant homogeneous polynomials $\clk$, $\clf$, and $\clq$ following \cite{fei2025}. Some other references include \cite{hitchin2000} and \cite{fei2015b}.

Let $V$ be a 6-dimensional real vector space equipped with a symplectic form $\omega$. For any 3-form $\varphi\in\bigwedge^3V^*$, we can define $\clk(\varphi)\in\endo~ V\otimes\bigwedge^6V^*$ by
\[\clk(\varphi)(v)=-\iota_v\varphi\wedge\varphi\in\bigwedge\nolimits^5V^*\cong V\otimes\bigwedge\nolimits^6V^*\]
for any $v\in V$. The isomorphism $\bigwedge^5V^*\cong V\otimes\bigwedge^6V^*$ is canonical. If we choose a basis $\{e_1,\dots,e_6\}$ of $V$ with dual basis $\{e^1,\dots,e^6\}$ of $V^*$, we have
\[\clk(\varphi)(v)=-\sum_{i=1}^6e_i\otimes e^i\wedge\iota_v\varphi\wedge\varphi\in V\otimes\bigwedge\nolimits^6V^*.\]
In addition, we can define $\clf(\varphi)\in\bigwedge^3V^*\otimes\bigwedge^6V^*$ by
\[\clf(\varphi)(v_1,v_2,v_3)=-2\varphi(\clk(\varphi)(v_1),v_2,v_3)\]
and $\clq(\varphi)\in(\bigwedge^6V^*)^{\otimes2}$ by
\[\clq(\varphi)=-\varphi\wedge\clf(\varphi).\]
It is clear from their definitions that $\clk$, $\clf$, and $\clq$ are quadratic, cubic, and quartic in $\varphi$ respectively. Moreover, $\clk$, $\clf$, and $\clq$ satisfies the following identities:
\begin{prop}\cite[Proposition 2.5]{fei2025}~
\begin{enumerate}
\item $\clk(\varphi)\circ\clk(\varphi)=\dfrac{\id_V}{4}\cdot\clq(\varphi)\in\endo~V\otimes(\bigwedge^6V^*)^{\otimes2}$,
\item $\clk(\clf(\varphi))=-\clk(\varphi)\cdot\clq(\varphi)\in\endo~V\otimes(\bigwedge^6V^*)^{\otimes3}$,
\item $\clf(\clf(\varphi))=-\varphi\cdot\clq^2(\varphi)\in\bigwedge^3V^*\otimes(\bigwedge^6V^*)^{\otimes4}$.
\end{enumerate}
\end{prop}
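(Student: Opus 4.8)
The three identities are homogeneous polynomial identities in $\varphi$ taking values in tensor powers of $\endo V$ and $\bigwedge^6 V^*$, so the plan is to prove them by reducing, via $\Gl(V)$-equivariance, to a single nondegenerate normal form where everything can be computed by hand. Throughout I write $e^{i_1\cdots i_k}$ for $e^{i_1}\wedge\cdots\wedge e^{i_k}$.

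First I would record that $\clk$, $\clf$, and $\clq$ are built from natural (basis- and metric-free) operations---interior product, wedge product, and the canonical isomorphism $\bigwedge^5 V^*\cong V\otimes\bigwedge^6 V^*$---so they are automatically $\Gl(V)$-equivariant for the evident actions (conjugation on $\endo V$, pullback on $\bigwedge^3 V^*$, and scaling by $(\det g)^{-1}$ on $\bigwedge^6 V^*$). One then checks that the two sides of each identity carry matching weights in the $\bigwedge^6 V^*$ factors: both sides of (1) transform in $\endo V\otimes(\bigwedge^6 V^*)^{\otimes2}$, both sides of (2) in $\endo V\otimes(\bigwedge^6 V^*)^{\otimes3}$, and both sides of (3) in $\bigwedge^3 V^*\otimes(\bigwedge^6 V^*)^{\otimes4}$. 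Getting these determinant weights to line up is the one bookkeeping point that must be done carefully. Granting it, the difference of the two sides of each identity is a $\Gl(V)$-equivariant polynomial map, hence it vanishes on an entire $\Gl(V)$-orbit as soon as it vanishes at a single point of that orbit.

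Next I would fix the normal form $\varphi_0=e^{123}+e^{456}$. A dimension count gives $\dim\Gl(6,\R)-\dim\mathrm{Stab}(\varphi_0)=36-16=20=\dim\bigwedge^3 V^*$ (the connected stabilizer is $\mathrm{SL}(3,\R)\times\mathrm{SL}(3,\R)$), so the orbit $\Gl(V)\cdot\varphi_0$ is open. Since a polynomial vanishing on a nonempty Euclidean-open subset of $\bigwedge^3 V^*$ vanishes identically, it suffices to verify the three identities at $\varphi_0$; no appeal to the second ($\clq<0$) orbit or to complexification is needed.

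Finally I would carry out the direct computation at $\varphi_0$. A short calculation gives $\clk(\varphi_0)=P\otimes e^{123456}$ with $P=\diag(-1,-1,-1,1,1,1)$, whence $\clf(\varphi_0)=2(e^{123}-e^{456})\otimes e^{123456}$ and $\clq(\varphi_0)=4\,(e^{123456})^{\otimes2}$. Identity (1) is then immediate from $P^2=\id_V$. For (2) and (3) the essential observation is the behaviour of $\clk$ and $\clf$ on $\psi_0:=e^{123}-e^{456}$: one finds $\clk(\psi_0)=-P\otimes e^{123456}$ and $\clf(\psi_0)=-2\varphi_0\otimes e^{123456}$. Writing $\clf(\varphi_0)=2\psi_0\otimes e^{123456}$ and extending $\clk,\clf$ multilinearly over the volume factor (so that $\clk(\psi_0\otimes\mu)=\clk(\psi_0)\otimes\mu^{\otimes2}$ and likewise for $\clf$), one gets $\clk(\clf(\varphi_0))=-4P\otimes(e^{123456})^{\otimes3}$, matching $-\clk(\varphi_0)\cdot\clq(\varphi_0)$, and $\clf(\clf(\varphi_0))=-16\varphi_0\otimes(e^{123456})^{\otimes4}=-\varphi_0\cdot\clq^2(\varphi_0)$. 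I expect the main obstacle to be conceptual rather than computational: it is the equivariance and weight-matching step together with the verification that the chosen orbit is open, after which the polynomial identity principle does all the remaining work and the computation at $\varphi_0$ is routine.
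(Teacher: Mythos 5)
The paper does not actually prove this proposition: it is imported verbatim from \cite[Proposition 2.5]{fei2025} with no argument given, so there is no in-paper proof to compare against. Judged on its own, your argument is correct and complete. The reduction is sound: each identity asserts the vanishing of a polynomial map on $\bigwedge^3V^*$ with values in a fixed finite-dimensional space, the constructions $\clk$, $\clf$, $\clq$ are manifestly $\Gl(V)$-natural (they use only $\iota_v$, $\wedge$, and the canonical isomorphism $\bigwedge^5V^*\cong V\otimes\bigwedge^6V^*$), and the two sides of each identity live in the same representation, so the difference is an equivariant polynomial that vanishes on all of $\Gl(V)\cdot\varphi_0$ once it vanishes at $\varphi_0$. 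Your stabilizer count ($\mathrm{SL}(3,\R)\times\mathrm{SL}(3,\R)$, dimension $16$, orbit dimension $20$) correctly shows that orbit is open, and a polynomial vanishing on a nonempty Euclidean-open set vanishes identically, so one normal form suffices and no complexification is needed. I checked the pointwise computations: with the paper's convention $\clk(\varphi)(v)=-\sum_ie_i\otimes e^i\wedge\iota_v\varphi\wedge\varphi$ one indeed gets $\clk(\varphi_0)=P\otimes e^{123456}$ with $P=\diag(-1,-1,-1,1,1,1)$, $\clf(\varphi_0)=2(e^{123}-e^{456})\otimes e^{123456}$, $\clq(\varphi_0)=4(e^{123456})^{\otimes2}$, $\clk(e^{123}-e^{456})=-P\otimes e^{123456}$, and $\clf(e^{123}-e^{456})=-2\varphi_0\otimes e^{123456}$, from which all three identities follow with the stated powers of the volume form, provided one interprets the compositions exactly as you do (homogeneity of degree $2$, $3$, $4$ in the $\bigwedge^6V^*$ factor). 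This open-orbit technique is the standard one going back to Hitchin \cite{hitchin2000} and is almost certainly how the cited source argues; the only step I would insist you write out in full is the weight-matching/equivariance bookkeeping that you yourself flag, since that is where a sign or determinant power could silently go wrong.
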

What we presented so far is independent of the symplectic structure. The symplectic form $\omega$ determines a canonical volume form $\omega^3/3!\in\bigwedge^6V^*$, with which we can view $\clk(\varphi)$, $\clf(\varphi)$ and $\clq(\varphi)$ as elements in $\endo~V$, $\bigwedge^3V^*$ and $\rr$ respectively.

A notable example is that for $\varphi$ such that $\clq(\varphi)<0$, we can define a complex structure $\clj(\varphi)$ on $V$ by
\[\clj(\varphi)=\frac{2\clk(\varphi)}{\sqrt{-\clq(\varphi)}}.\]
The 3-form $\varphi$ is the real part of a complex $(3,0)$-form $\varphi+i\hat\varphi$ with respect to $\clj(\varphi)$. The imaginary part $\hat\varphi$ is purely determined from $\varphi$ by
\[\hat\varphi=\clj(\varphi)^*\varphi=\frac{\clf(\varphi)}{\sqrt{-\clq(\varphi)}}.\]
If we define $|\varphi|^2$ by
\[\varphi\wedge\clf(\varphi)=|\varphi|^4\frac{\omega^3}{3!},\]
then we have $\clk(\varphi)=\dfrac{|\varphi|^2}{2}\clj(\varphi)$, $\clf(\varphi)=|\varphi|^2\hat\varphi$, and $\clq(\varphi)=-|\varphi|^4$.

Under the presence of the symplectic form $\omega$, we usually only consider $\varphi$ that is primitive with respect to $\omega$. In this case, $\clf(\varphi)$ is also primitive. If in addition that $\clq(\varphi)<0$, we know that $\omega$ is a $(1,1)$-form with respect to the complex structure $\clj(\varphi)$ and that $|\varphi|^2$ is indeed the norm square of $\varphi$ with respect to the metric (not necessarily positive definite) $g(\cdot,\cdot)=\omega(\cdot,\clj(\varphi)\cdot)$.

Given a 6-dimensional symplectic vector space $(V,\omega)$, we can choose a basis $\{e_1,\dots,e_6\}$ of $V$ and its dual basis $\{e^1,\dots,e^6\}$ of $V^*$ such that the symplectic form $\omega$ takes the standard form $\omega=e^{12}+e^{34}+e^{56}$. Under such a choice, a general 3-form $\varphi\in\bigwedge^3V^*$ takes the form
\bea
\varphi&=&Ae^{135}+Be^{136}+Ce^{145}+De^{146}+Ee^{235}+Fe^{236}+Ge^{245}+He^{246}\label{generalphi}\\
&&+(Ie^1+Je^2)(e^{34}-e^{56})+(Ke^3+Le^4)(e^{12}-e^{56})+(Me^5+Ne^6)(e^{12}-e^{34})\nonumber\\
&&+(Oe^1+Pe^2)(e^{34}+e^{56})+(Qe^3+Re^4)(e^{12}+e^{56})+(Se^5+Te^6)(e^{12}+e^{34}),\nonumber
\eea
where $A,B,\dots,S,T$ are constants. And $\varphi$ is primitive if and only if the last 6 coefficients $O,P,Q,R,S,T$ are all zero, namely $\varphi$ takes the form
\bea
\varphi&=&Ae^{135}+Be^{136}+Ce^{145}+De^{146}+Ee^{235}+Fe^{236}+Ge^{245}+He^{246}\label{primitivephi}\\
&&+(Ie^1+Je^2)(e^{34}-e^{56})+(Ke^3+Le^4)(e^{12}-e^{56})+(Me^5+Ne^6)(e^{12}-e^{34}).\nonumber
\eea

For later use, we shall compute the expression of $\clk(\varphi)$, $\clf(\varphi)$, and $\clq(\varphi)$ for $\varphi$ in (\ref{generalphi}).
\begin{lemma}\label{k}
The endomorphism $\clk(\varphi)$ is given by
\bea
\clk(\varphi)(e_1)&=&(AH-BG-CF+DE+2IJ-2KR+2LQ-2MT+2NS-2OP)e_1\nonumber\\
&&-2(AD-BC+I^2-O^2)e_2\nonumber\\
&&-2(C(N+T)-D(M+S)-(I-O)(L+R))e_3\nonumber\\
&&+2(A(N+T)-B(M+S)-(I-O)(K+Q))e_4\nonumber\\
&&+2(B(L+R)-D(K+Q)-(I+O)(N+T))e_5\nonumber\\
&&-2(A(L+R)-C(K+Q)-(I+O)(M+S))e_6.\nonumber
\eea
\bea
\clk(\varphi)(e_2)&=&2(EH-FG+J^2-P^2)e_1\nonumber\\
&&-(AH-BG-CF+DE+2IJ+2KR-2LQ+2MT-2NS-2OP)e_2\nonumber\\
&&-2(G(N+T)-H(M+S)-(J-P)(L+R))e_3\nonumber\\
&&+2(E(N+T)-F(M+S)-(J-P)(K+Q))e_4\nonumber\\
&&+2(F(L+R)-H(K+Q)-(J+P)(N+T))e_5\nonumber\\
&&-2(E(L+R)-G(K+Q)-(J+P)(M+S))e_6.\nonumber
\eea
\bea
\clk(\varphi)(e_3)&=&-2(E(N-T)-F(M-S)-(J+P)(K-Q))e_1\nonumber\\
&&+2(A(N-T)-B(M-S)-(I+O)(K-Q))e_2\nonumber\\
&&+(AH-BG+CF-DE-2IP+2JO+2KL+2MT-2NS-2QR)e_3\nonumber\\
&&-2(AF-BE+K^2-Q^2)e_4\nonumber\\
&&-2(B(J+P)-F(I+O)-(K+Q)(N-T))e_5\nonumber\\
&&+2(A(J+P)-E(I+O)-(K+Q)(M-S))e_6.\nonumber
\eea
\bea
\clk(\varphi)(e_4)&=&-2(G(N-T)-H(M-S)-(J+P)(L-R))e_1\nonumber\\
&&+2(C(N-T)-D(M-S)-(I+O)(L-R))e_2\nonumber\\
&&+2(CH-DG+L^2-R^2)e_3\nonumber\\
&&-(AH-BG+CF-DE+2IP-2JO+2KL-2MT+2NS-2QR)e_4\nonumber\\
&&-2(D(J+P)-H(I+O)-(L+R)(N-T))e_5\nonumber\\
&&+2(C(J+P)-G(I+O)-(L+R)(M-S))e_6.\nonumber
\eea
\bea
\clk(\varphi)(e_5)&=&2(E(L-R)-G(K-Q)-(J-P)(M-S))e_1\nonumber\\
&&-2(A(L-R)-C(K-Q)-(I-O)(M-S))e_2\nonumber\\
&&-2(C(J-P)-G(I-O)-(L-R)(M+S))e_3\nonumber\\
&&+2(A(J-P)-E(I-O)-(K-Q)(M+S))e_4\nonumber\\
&&+(AH+BG-CF-DE+2IP-2JO+2KR-2LQ+2MN-2ST)e_5\nonumber\\
&&-2(AG-CE+M^2-S^2)e_6.\nonumber
\eea
\bea
\clk(\varphi)(e_6)&=&2(F(L-R)-H(K-Q)-(J-P)(N-T))e_1\nonumber\\
&&-2(B(L-R)-D(K-Q)-(I-O)(N-T))e_2\nonumber\\
&&-2(D(J-P)-H(I-O)-(L-R)(N+T))e_3\nonumber\\
&&+2(B(J-P)-F(I-O)-(K-Q)(N+T))e_4\nonumber\\
&&+2(BH-DF+N^2-T^2)e_5\nonumber\\
&&-(AH+BG-CF-DE-2IP+2JO-2KR+2LQ+2MN-2ST)e_6.\nonumber
\eea
\end{lemma}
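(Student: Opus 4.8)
The statement is a direct expansion of the definition of $\clk$, so the plan is to reduce it to a finite scalar computation and then organize the bookkeeping so the answer is both tractable and self-checking. First I would pin down the identifications. Since $\o=e^{12}+e^{34}+e^{56}$ we have $\o^3/3!=e^{123456}$, and under the canonical isomorphism $\bigwedge^5V^*\cong V\otimes\bigwedge^6V^*$ a $5$-form $\alpha$ corresponds to $\sum_i e_i\otimes(e^i\wedge\alpha)$. Hence, defining scalars $c_{ij}$ by
\[ e^i\wedge\iota_{e_j}\varphi\wedge\varphi=c_{ij}\,e^{123456},\]
the coefficient of $e_i$ in $\clk(\varphi)(e_j)$ is precisely $-c_{ij}$, and the whole lemma amounts to evaluating these $36$ scalars. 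The first concrete step is to record the six $2$-forms $\iota_{e_j}\varphi$ by contracting (\ref{generalphi}) termwise; each is a short combination of the $e^{kl}$ with coefficients drawn from $A,\dots,T$.

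For the main computation I would use two structural features. Since $\clk$ is quadratic in $\varphi$, every $c_{ij}$ is a quadratic form in $A,\dots,T$, so expanding $\iota_{e_j}\varphi\wedge\varphi$ and keeping only those monomials that complete to $e^{123456}$ isolates, for each entry, a short list of coefficient-pairs together with their signs. Moreover $\clk$ is equivariant under the symplectic group $\Sp(V,\o)$, and in particular under the finite subgroup that cyclically permutes the three symplectic coordinate planes $\{e_1,e_2\}$, $\{e_3,e_4\}$, $\{e_5,e_6\}$ together with the symplectic involutions inside each plane. This subgroup permutes both the letters $A,\dots,T$ and the six output vectors $\clk(\varphi)(e_j)$, so it suffices to expand a single representative row, say $\clk(\varphi)(e_1)$, and to transport it to the other five by equivariance. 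This both reduces the labor and exposes any inconsistency between rows.

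The only genuine difficulty is the sheer volume of sign bookkeeping: there are many cross terms of the form (coefficient)$\times$(coefficient), and a single mistaken sign in a wedge product propagates through an entire column. To keep this under control I would verify the assembled matrix against the structural identities available from the excerpt before trusting it. The cheapest test is that $\clk(\varphi)$ is trace-free: since $\sum_j e^j\wedge\iota_{e_j}\varphi=3\varphi$ and $\varphi\wedge\varphi=0$ for a $3$-form, one gets $\sum_j c_{jj}=0$, so the six diagonal entries must cancel term by term. A far more stringent test is Proposition part (a): after recording $\clq(\varphi)$ from $\clq(\varphi)\,e^{123456}=-\varphi\wedge\clf(\varphi)$, the computed matrix must satisfy $\clk(\varphi)\circ\clk(\varphi)=\tfrac14\clq(\varphi)\,\id_V$. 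Passing both checks essentially certifies the formulae, and the remaining work is the careful but routine expansion described above.
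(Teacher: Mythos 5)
Your proposal is correct and matches the paper's proof, which is simply the brute-force expansion of $\clk(\varphi)(e_j)=-\sum_i e_i\otimes e^i\wedge\iota_{e_j}\varphi\wedge\varphi$ against $\omega^3/3!=e^{123456}$ in the chosen basis. The equivariance reduction and the consistency checks (trace-freeness, which the stated diagonal entries do satisfy, and $\clk(\varphi)\circ\clk(\varphi)=\tfrac14\clq(\varphi)\,\id_V$) are sensible additions to the same computation rather than a different argument.
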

\begin{lemma}\label{f}
The expression of $\clf(\varphi)$ satisfies
\bea
-\frac{1}{2}\clf(\varphi)&=&\hat Ae^{135}+\hat Be^{136}+\hat Ce^{145}+\hat De^{146}+\hat Ee^{235}+\hat Fe^{236}+\hat Ge^{245}+\hat He^{246}\label{generalf}\\
&&+(\hat Ie^1+\hat Je^2)(e^{34}-e^{56})+(\hat Ke^3+\hat Le^4)(e^{12}-e^{56})+(\hat Me^5+\hat Ne^6)(e^{12}-e^{34})\nonumber\\
&&+(\hat Oe^1+\hat Pe^2)(e^{34}+e^{56})+(\hat Qe^3+\hat Re^4)(e^{12}+e^{56})+(\hat Se^5+\hat Te^6)(e^{12}+e^{34}),\nonumber
\eea
where
\bea
\hat A&=&A(AH-BG-CF-DE+2IJ+2KL+2MN-2OP-2QR-2ST)\nonumber\\
&&-2(B(M^2-S^2)+C(K^2-Q^2)+E(I^2-O^2)-BCE)\nonumber\\
&&-4(IKM+OKS-IQS-OQM),\nonumber
\eea
\bea
\hat B&=&B(AH-BG+CF+DE+2IJ+2KL-2MN-2OP-2QR+2ST)\nonumber\\
&&-2(-A(N^2-T^2)+D(K^2-Q^2)+F(I^2-O^2)+ADF)\nonumber\\
&&-4(IKN+OKT-IQT-OQN),\nonumber
\eea
\bea
\hat C&=&C(AH+BG-CF+DE+2IJ-2KL+2MN-2OP+2QR-2ST)\nonumber\\
&&-2(-A(L^2-R^2)+D(M^2-S^2)+G(I^2-O^2)+ADG)\nonumber\\
&&-4(ILM+OLS-IRS-ORM),\nonumber
\eea
\bea
\hat D&=&D(-AH-BG-CF+DE+2IJ-2KL-2MN-2OP+2QR+2ST)\nonumber\\
&&-2(-B(L^2-R^2)-C(N^2-T^2)+H(I^2-O^2)-BCH)\nonumber\\
&&-4(ILN+OLT-IRT-ORN),\nonumber
\eea
\bea
\hat E&=&E(AH+BG+CF-DE-2IJ+2KL+2MN+2OP-2QR-2ST)\nonumber\\
&&-2(-A(J^2-P^2)+F(M^2-S^2)+G(K^2-Q^2)+AFG)\nonumber\\
&&-4(JKM+PKS-JQS-PQM),\nonumber
\eea
\bea
\hat F&=&F(-AH-BG+CF-DE-2IJ+2KL-2MN+2OP-2QR+2ST)\nonumber\\
&&-2(-B(J^2-P^2)+H(K^2-Q^2)-E(N^2-T^2)-BEH)\nonumber\\
&&-4(JKN+PKT-JQT-PQN),\nonumber
\eea
\bea
\hat G&=&G(-AH+BG-CF-DE-2IJ-2KL+2MN+2OP+2QR-2ST)\nonumber\\
&&-2(-C(J^2-P^2)-E(L^2-R^2)+H(M^2-S^2)-CEH)\nonumber\\
&&-4(JLM+PLS-JRS-PRM),\nonumber
\eea
\bea
\hat H&=&H(-AH+BG+CF+DE-2IJ-2KL-2MN+2OP+2QR+2ST)\nonumber\\
&&-2(-D(J^2-P^2)-F(L^2-R^2)-G(N^2-T^2)+DFG)\nonumber\\
&&-4(JLN+PLT-JRT-PRN),\nonumber
\eea
and
\bea
\hat I&=&I(AH-BG-CF+DE)-2J(AD-BC)\nonumber\\
&&+2(A(LN-RT)-B(LM-RS)-C(KN-QT)+D(KM-QS))\nonumber\\
&&-2O(IP-JO+KR-LQ-MT+NS),\nonumber
\eea
\bea
\hat J&=&J(-AH+BG+CF-DE)+2I(EH-FG)\nonumber\\
&&+2(E(LN-RT)-F(LM-RS)-G(KN-QT)+H(KM-QS))\nonumber\\
&&-2P(IP-JO+KR-LQ-MT+NS)\nonumber
\eea
\bea
\hat K&=&K(AH-BG+CF-DE)-2L(AF-BE)\nonumber\\
&&+2(A(JN+PT)-B(JM+PS)-E(IN+OT)+F(IM+OS))\nonumber\\
&&-2Q(IP-JO+KR-LQ+MT-NS),\nonumber
\eea
\bea
\hat L&=&L(-AH+BG-CF+DE)+2K(CH-DG)\nonumber\\
&&+2(C(JN+PT)-D(JM+PS)-G(IN+OT)+H(IM+OS))\nonumber\\
&&-2R(IP-JO+KR-LQ+MT-NS),\nonumber
\eea
\bea
\hat M&=&M(AH+BG-CF-DE)-2N(AG-CE)\nonumber\\
&&+2(A(JL-PR)-C(JK-PQ)-E(IL-OR)+G(IK-OQ))\nonumber\\
&&+2S(IP-JO-KR+LQ-MT+NS),\nonumber
\eea
\bea
\hat N&=&N(-AH-BG+CF+DE)+2M(BH-DF)\nonumber\\
&&+2(B(JL-PR)-D(JK-PQ)-F(IL-OR)+H(IK-OQ))\nonumber\\
&&+2T(IP-JO-KR+LQ-MT+NS),\nonumber
\eea
\bea
\hat O&=&O(AH-BG-CF+DE)-2P(AD-BC)\nonumber\\
&&-2(A(LT-RN)-B(LS-RM)-C(KT-QN)+D(KS-QM))\nonumber\\
&&-2I(IP-JO+KR-LQ-MT+NS)\nonumber
\eea
\bea
\hat P&=&P(-AH+BG+CF-DE)+2O(EH-FG)\nonumber\\
&&-2(E(LT-RN)-F(LS-RM)-G(KT-QN)+H(KS-QM))\nonumber\\
&&-2J(IP-JO+KR-LQ-MT+NS)\nonumber
\eea
\bea
\hat Q&=&Q(AH-BG+CF-DE)-2R(AF-BE)\nonumber\\
&&+2(A(JT+PN)-B(JS+PM)-E(IT+ON)+F(IS+OM))\nonumber\\
&&-2K(IP-JO+KR-LQ+MT-NS),\nonumber
\eea
\bea
\hat R&=&R(-AH+BG-CF+DE)+2Q(CH-DG)\nonumber\\
&&+2(C(JT+PN)-D(JS+PM)-G(IT+ON)+H(IS+OM))\nonumber\\
&&-2L(IP-JO+KR-LQ+MT-NS),\nonumber
\eea
\bea
\hat S&=&S(AH+BG-CF-DE)-2T(AG-CE)\nonumber\\
&&+2(A(JR-PL)-C(JQ-PK)-E(IR-OL)+G(IQ-OK))\nonumber\\
&&+2M(IP-JO-KR+LQ-MT+NS),\nonumber
\eea
\bea
\hat T&=&T(-AH-BG+CF+DE)+2S(BH-DF)\nonumber\\
&&+2(B(JR-PL)-D(JQ-PK)-F(IR-OL)+H(IQ-OK))\nonumber\\
&&+2N(IP-JO-KR+LQ-MT+NS).\nonumber
\eea
\end{lemma}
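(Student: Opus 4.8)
The plan is to derive Lemma~\ref{f} as a direct consequence of Lemma~\ref{k} together with the defining relation $\clf(\varphi)(v_1,v_2,v_3)=-2\varphi(\clk(\varphi)(v_1),v_2,v_3)$. Writing $\clk(\varphi)(e_i)=\sum_l K_i^{\,l}e_l$ with the entries $K_i^{\,l}$ supplied by Lemma~\ref{k}, and denoting by $\varphi_{ljk}=\varphi(e_l,e_j,e_k)$ the totally antisymmetric components of the general form (\ref{generalphi}), the definition at once gives
\[-\tfrac12\clf(\varphi)(e_i,e_j,e_k)=\sum_l K_i^{\,l}\,\varphi_{ljk}.\]
Since $\clf(\varphi)$ is a genuine $3$-form (as built into the construction recalled from \cite{fei2025}), the right-hand side is automatically totally antisymmetric in $i,j,k$, so it suffices to evaluate it on the triples underlying (\ref{generalf}). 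The coefficients $\hat A,\dots,\hat H$ are then the components on $e^{135},\dots,e^{246}$ read off directly, while each ``plane'' coefficient is recovered by inverting a $2\times2$ system coming from the $\pm$ basis; for instance $\hat I+\hat O=-\tfrac12\clf(\varphi)(e_1,e_3,e_4)$ and $-\hat I+\hat O=-\tfrac12\clf(\varphi)(e_1,e_5,e_6)$ determine $\hat I$ and $\hat O$, and analogously for the other planes.

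To keep this manageable I would exploit the equivariance of the covariant $\varphi\mapsto\clf(\varphi)$ under the stabilizer of $\omega$ in $\Gl(V)$. As the normalizing volume form $\omega^3/3!$ is invariant, $\clf$ is equivariant for the symplectic group as a map $\bigwedge^3V^*\to\bigwedge^3V^*$; the relevant symmetries are the permutations of the three symplectic planes $\spann\{e_1,e_2\}$, $\spann\{e_3,e_4\}$, $\spann\{e_5,e_6\}$ together with the area-preserving linear transformations inside each plane. Under this group the twenty coefficients fall into exactly three orbits: the eight primitive $e^{135}$-type coefficients $A,\dots,H$, the six primitive ``difference'' coefficients $I,\dots,N$, and the six non-primitive ``sum'' coefficients $O,\dots,T$ (the last being distinct because the symplectic group cannot mix $\clp^3\frg^*$ with $\omega\wedge\clp^1\frg^*$). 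It therefore suffices to compute honestly the three representatives $\hat A$, $\hat I$, and $\hat O$ and to transport each across its orbit by relabelling the inputs $A,\dots,T$ according to the symmetry.

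The main obstacle is purely the bookkeeping: every one of $\hat A,\dots,\hat T$ is a cubic in twenty variables with on the order of a dozen monomials, and the principal source of error is sign tracking, both in the wedge products defining $\clk(\varphi)$ and in the inversion of the $\pm$ basis. The orbit reduction trims the work but does not remove this difficulty. I would control it with three checks that the final formulae must pass: (i) \emph{primitivity is preserved}, i.e.\ setting $O=P=Q=R=S=T=0$ must force $\hat O=\hat P=\hat Q=\hat R=\hat S=\hat T=0$, as asserted in Section~3; (ii) on the locus $\clq(\varphi)<0$ the resulting $\clf(\varphi)$ must agree with $|\varphi|^2\hat\varphi=\sqrt{-\clq(\varphi)}\,\clj(\varphi)^*\varphi$; and (iii) the quartic $-\varphi\wedge\clf(\varphi)$ assembled from these coefficients must reproduce $\clq(\varphi)$ — equivalently the diagonal of $\clk(\varphi)^2$ in Lemma~\ref{k} via $\clk(\varphi)^2=\tfrac14\clq(\varphi)\id_V$ — which cross-checks a large number of the cubic terms simultaneously.
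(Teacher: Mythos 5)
Your proposal is correct and is essentially the paper's own proof, which is simply a brute-force evaluation of $-\tfrac12\clf(\varphi)(e_i,e_j,e_k)=\varphi(\clk(\varphi)(e_i),e_j,e_k)$ using the matrix entries from Lemma~\ref{k} and reading off the coefficients in the basis of (\ref{generalf}). The equivariance-under-$\Sp(6,\rr)$ orbit reduction and the three consistency checks (primitivity, agreement with $|\varphi|^2\hat\varphi$, and recovering $\clq$ via $\varphi\wedge\clf(\varphi)$) are sensible labor-saving and error-catching devices layered on top of the same computation, not a different argument.
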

\begin{lemma}\label{q}
We have
\begin{equation}
\begin{split}
&\frac{\clq(\varphi)}{4}\label{generalq}\\
=&2(A^2H^2+B^2G^2+C^2F^2+D^2E^2)-(AH+BG+CF+DE)^2+4(ADFG+BCEH)\\
+&4(I^2-O^2)(FG-EH)+4(J^2-P^2)(BC-AD)+4(IJ-OP)(AH-BG-CF+DE)\\
+&4(K^2-Q^2)(DG-CH)+4(L^2-R^2)(BE-AF)+4(KL-QR)(AH-BG+CF-DE)\\
+&4(M^2-S^2)(DF-BH)+4(N^2-T^2)(CE-AG)+4(MN-ST)(AH+BG-CF-DE)\\
+&8A(JLN-JRT+PLT-PRN)-8B(JLM-JRS+PLS-PRM)\\
-&8C(JKN-JQT+PKT-PQN)+8D(JKM-JQS+PKS-PQM)\\
-&8E(ILN-IRT+OLT-ORN)+8F(ILM-IRS+OLS-ORM)\\
+&8G(IKN-IQT+OKT-OQN)-8H(IKM-IQS+OKS-OQM)\\
+&8\left[(IP-JO)^2+(LQ-KR)^2+(MT-NS)^2\right]-4(IP-JO-KR+LQ+MT-NS)^2.\nonumber
\end{split}
\end{equation}
\end{lemma}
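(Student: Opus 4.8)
The plan is to compute $\clq(\varphi)$ straight from its definition $\clq(\varphi)=-\varphi\wedge\clf(\varphi)$, substituting the expression for $\clf(\varphi)$ already furnished by Lemma \ref{f}. Because $\omega=e^{12}+e^{34}+e^{56}$ yields $\omega^3/3!=e^{123456}$, identifying the element $\clq(\varphi)\in(\bigwedge^6V^*)^{\otimes2}$ with a scalar is the same as extracting the coefficient of $e^{123456}$ in $-\varphi\wedge\clf(\varphi)$. Writing $\clf(\varphi)=-2(\hat Ae^{135}+\hat Be^{136}+\cdots)$ with the cubic coefficients $\hat A,\dots,\hat T$ of Lemma \ref{f}, the quartic $\clq(\varphi)/4$ appears as a signed sum of products of a coefficient of $\varphi$ with the complementary coefficient of $\clf(\varphi)$.

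Concretely, I would first re-expand both $\varphi$ and $\clf(\varphi)$ in the monomial basis $\{e^I:|I|=3\}$ of $\bigwedge^3V^*$. The one genuine subtlety is that the parametrization (\ref{generalphi}) is adapted to the Lefschetz decomposition rather than to the monomial basis, so a single monomial receives contributions from two of the $\pm$-combinations: for instance $e^{134}$ has coefficient $I+O$ while $e^{156}$ has coefficient $O-I$ in $\varphi$. Having collected the monomial coefficients $\varphi_I$ of $\varphi$ and $(\clf)_I$ of $\clf(\varphi)$ in this way, I would record for each triple the shuffle sign $\epsilon_I$ determined by $e^I\wedge e^{I^c}=\epsilon_I\,e^{123456}$ (for example $e^{135}\wedge e^{246}=-e^{123456}$), after which the coefficient of $e^{123456}$ in $\varphi\wedge\clf(\varphi)$ is simply $\sum_{|I|=3}\epsilon_I\,\varphi_I(\clf)_{I^c}$. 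It is precisely the merging of the $I\pm O$ type contributions that produces the factors $I^2-O^2$, $J^2-P^2$, $\dots$ in the statement, whereas the eight pure coefficients $A,\dots,H$ account for the leading terms $2(A^2H^2+\cdots)-(AH+BG+CF+DE)^2+4(ADFG+BCEH)$ and the mixed terms such as $8A(JLN-JRT+PLT-PRN)$, and pairing the middle coefficients against one another through $\clf$ yields the block $8[(IP-JO)^2+(LQ-KR)^2+(MT-NS)^2]-4(\cdots)^2$.

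As an independent check — and in fact a shorter route — I would appeal to the first identity of the Proposition, $\clk(\varphi)\circ\clk(\varphi)=\frac{\id_V}{4}\clq(\varphi)$. Since the right-hand side is a scalar multiple of $\id_V$, the quantity $\clq(\varphi)/4$ equals any diagonal entry of the matrix of $\clk(\varphi)$ squared. Taking the $(1,1)$ entry gives $\clq(\varphi)/4=\sum_{j=1}^{6}K_{1j}K_{j1}$, where $K_{ij}$ denotes the coefficient of $e_i$ in $\clk(\varphi)(e_j)$, all of which are tabulated in Lemma \ref{k}. This collapses the entire computation to a single sum of six products of the already-recorded linear and quadratic expressions, and it must reproduce the same quartic.

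The difficulty here is organizational rather than conceptual. The obstacle is to keep the twenty monomials, their complementary pairings, the attendant signs, and the overlapping $I\pm O$ contributions all under control, and then to reassemble the resulting quartic into the symmetric groupings displayed in the statement. I expect the sign bookkeeping in the wedge pairings, together with the correct separation of the middle-variable terms into the $I^2-O^2$ factors on one hand and the $(IP-JO)^2$ block on the other, to be the steps most liable to error; for that reason I would treat the diagonal-entry computation from $\clk(\varphi)^2$ as the principal cross-validation of the final expression.
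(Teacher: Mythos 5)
Your proposal is correct and matches the paper's approach: the paper's entire proof of Lemmas \ref{k}, \ref{f}, and \ref{q} is the one-line statement that they follow by brute force calculation, which is exactly the direct expansion of $-\varphi\wedge\clf(\varphi)$ (equivalently, the pairing of coefficients of $\varphi$ against the hatted coefficients) that you describe. Your cross-check via the $(1,1)$ entry of $\clk(\varphi)\circ\clk(\varphi)=\frac{\id_V}{4}\clq(\varphi)$ is a sensible additional safeguard but not a different method in substance.
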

\begin{proof}
Lemma \ref{k}, \ref{f} and \ref{q} can be proved by brute force calculation.
\end{proof}
\begin{rmk}~\\
In many cases, we only need Lemma \ref{k}, \ref{f} and \ref{q} for $\varphi$ being primitive. In these cases, we put $O=P=Q=R=S=T=0$ and obtain simpler formulae.
\end{rmk}

In \cite{hitchin2000}, Hitchin essentially proved that
\[\delta\clq(\varphi)=-4\frac{\delta\varphi\wedge\clf(\varphi)}{\omega^3/3!},\]
which holds for arbitrary $\varphi$ by its algebraicity. Using the notation in Lemma \ref{q}, we have
\[\begin{split}\frac{\clq(\varphi)}{2}=&-A\hat H+B\hat G+C\hat F-D\hat E+E\hat D-F\hat C-G\hat B+H\hat A-2I\hat J+2J\hat I-2K\hat L+2L\hat K\\
&-2M\hat N+2M\hat N+2O\hat P-2P\hat O+2Q\hat R-2R\hat Q+2S\hat T-2T\hat S.\end{split}\]
Hitchin's result in local coordinates is equivalent to the following seeming-wrongly-scaled formulae:
\bea
\frac{\pt\clq}{\pt A}=-8\hat H,&\quad &\frac{\pt\clq}{\pt H}=8\hat A,\nonumber\\
\frac{\pt\clq}{\pt B}=8\hat G,&\quad &\frac{\pt\clq}{\pt G}=-8\hat B,\nonumber\\
\frac{\pt\clq}{\pt C}=8\hat F,&\quad &\frac{\pt\clq}{\pt F}=-8\hat C,\nonumber\\
\frac{\pt\clq}{\pt D}=-8\hat E,&\quad &\frac{\pt\clq}{\pt E}=8\hat D,\nonumber
\eea
and
\bea
\frac{\pt\clq}{\pt I}=-16\hat J,&\quad &\frac{\pt\clq}{\pt J}=16\hat I,\nonumber\\
\frac{\pt\clq}{\pt K}=-16\hat L,&\quad &\frac{\pt\clq}{\pt L}=16\hat K,\nonumber\\
\frac{\pt\clq}{\pt M}=-16\hat N,&\quad &\frac{\pt\clq}{\pt N}=16\hat M.\nonumber\\
\frac{\pt\clq}{\pt O}=16\hat P,&\quad &\frac{\pt\clq}{\pt P}=-16\hat O.\nonumber\\
\frac{\pt\clq}{\pt Q}=16\hat R,&\quad &\frac{\pt\clq}{\pt R}=-16\hat Q.\nonumber\\
\frac{\pt\clq}{\pt S}=16\hat T,&\quad &\frac{\pt\clq}{\pt T}=-16\hat S.\nonumber
\eea
In fact, the scaling here is a consequence of the Euler's identity.

\section{Examples}

In this section, we study the problem of existence and uniqueness of $\clf$-harmonic forms, and the long-time behavior of the Type IIA flow on symplectic Lie groups with left invariant data. To be more specific, we would like to address the following two questions.\\

\noindent\textbf{Question 1}: The existence and uniqueness of $\clf$-harmonic 3-forms in a fixed cohomology class.\\

By definition, a 3-form $\varphi$ is $\clf$-harmonic if $\ud\varphi=0$ and $\ud\clf(\varphi)=0$. As an analogue of the standard Hodge theory, we would like to address Question 1 in the set-up of a symplectic Lie algebra $(\frg,\omega)$ with $\varphi\in\bigwedge^3\frg^*$. Since $\ud\varphi=0$, we know that $\varphi$ defines a de Rham cohomology class in $\textrm{H}^3(\frg;\rr)$ for a general 3-form, and that $\varphi$ defines a cohomology class in $\textrm{PH}^3(\frg;\rr)$ and $\textrm{SH}^3_\pm(\frg;\rr)$ if $\varphi$ is primitive. We would like to know that, if the cohomology class of $\varphi$ is fixed in the above sense (de Rham for general $\varphi$, and one of the three kinds of cohomologies for primitive $\varphi$), does there exist a unique $\clf$-harmonic representative.

For a given symplectic Lie algebra $(\frg,\omega)$, to answer Question 1, we proceed along the following steps.

\begin{description}
\item{\textbf{Step 1}:} Compute $\textrm{H}^3(\frg;\rr)$, $\textrm{PH}^3(\frg;\rr)$, and $\textrm{SH}^3_\pm(\frg;\rr)$ explicitly.
\item{\textbf{Step 2}:} By assuming $\varphi$ taking the form of (\ref{generalphi}) or (\ref{primitivephi}), we reduce the $F$-harmonic equation to a system of polynomial equations.
\item{\textbf{Step 3}:} Determine the existence and uniqueness of solutions to the system of algebraic equations in Step 2 within each cohomology class.
\end{description}

\noindent\textbf{Question 2}: The long-time behavior of the Type IIA flow.\\

We would like to investigate the long-time behavior of the Type IIA flow (\ref{iiaflow}) on symplectic Lie groups and their quotients, including whether the flow has long-time existence of finite-time singularities. In its original setup \cite{fei2021b}, the initial data is both closed, primitive, and positive. However, our formulation in (\ref{iiaflow}) allows us to extend the initial data to be a general 3-form, not necessarily closed or primitive. In this paper, we will consider such a flow with initial data under three kinds of generalities: (1) a general 3-form; (2) a primitive 3-form; (3) a closed primitive 3-form.

It is worth mentioning that there is little difference between the case of general initial data and that of primitive initial data. The Lefschetz decomposition for 3-form reads 
\[\bigwedge\nolimits^3\frg^*=\clp^3\frg^*\oplus\omega\wedge\frg^*.\]
For any 1-form $\alpha\in\frg^*$, we have that
\[\ud\Lambda_\omega\ud(\omega\wedge\alpha)=\ud\Lambda_\omega(\omega\wedge\ud\alpha)=\ud(\ud\alpha+\omega\wedge\Lambda_\omega\ud\alpha)=0\]
since $\Lambda_\omega\ud\alpha$ is a constant. This computation implies that $\ud\Lambda_\omega\ud\clf(\varphi)$ has only primitive components, hence the non-primitive components of $\varphi$ are stationary under the Type IIA flow (\ref{iiaflow}) with invariant data.

To address Question 2, we first reduce the Type IIA flow to an ODE system. Then, we find its stationary points. Finally, we analyze the long-time behavior of this ODE system.

\subsection{A nilpotent example}

Now let us take a closer look at the nilmanifold considered in \cite[Example 5.2]{bartolomeis2006} and \cite[pp. 798-799]{fei2021b}, where the underlying nilpotent Lie algebra $\frg$ is characterized by $(0,0,0,e^{15},0,e^{13})$ with the symplectic form $\omega$
\[\omega=e^{12}+e^{34}+e^{56}.\]

\noindent For Question 1, firstly, we compute the various relevant cohomology groups. It is straightforward to obtain the following:
\begin{prop}\label{prop1a}~\\
\begin{enumerate}
\item The space of $\ud$-closed 3-forms is spanned by the following basis vectors
\[\begin{split}&e^{135},e^{136},e^{145},e^{146},e^{235},e^{236},e^{245},e^1(e^{34}-e^{56}),e^3(e^{12}-e^{56}),e^5(e^{12}-e^{34}),\\ &e^1(e^{34}+e^{56}),e^2(e^{34}+e^{56}),e^3(e^{12}+e^{56}),e^5(e^{12}+e^{34}),e^{124},e^{126}.\end{split}\]
\item The space of $\ud$-closed primitive 3-forms is spanned by the following basis vectors
\[e^{135},e^{136},e^{145},e^{146},e^{235},e^{236},e^{245},e^1(e^{34}-e^{56}),e^3(e^{12}-e^{56}),e^5(e^{12}-e^{34}).\]
\item The space of $\ud$-exact 3-forms is spanned by the following basis vectors
\[e^{135}=\ud(e^{34})=-\ud(e^{56}),e^1(e^{34}-e^{56})=-\ud(e^{46}),e^{123}=\ud(e^{26}),e^{125}=\ud(e^{24}).\]
\item The space of $\ud$-exact primitive 3-forms is spanned by the following basis vectors
\[e^{135}=\ud(e^{34})=-\ud(e^{56}),e^1(e^{34}-e^{56})=-\ud(e^{46}).\]
\item The space of $\p_+\p_-$-closed primitive 3-forms is spanned by the following vectors
\[\begin{split}&e^{135},e^{136},e^{145},e^{146},e^{235},e^{236},e^{245},e^1(e^{34}-e^{56}),e^2(e^{34}-e^{56}),e^3(e^{12}-e^{56}),\\
&e^4(e^{12}-e^{56}),e^5(e^{12}-e^{34}),e^6(e^{12}-e^{34}).\end{split}\]
\item The space of $\p_+\p_-$-exact primitive 3-forms is spanned by the following vector
\[e^{135}=-\frac{1}{2}\p_+\p_-(e^{246}).\]
\item The space of $\p_+$-exact primitive 3-forms is spanned by the following vectors
\[\begin{split}&e^{135}=\frac{1}{2}\p_+(e^{34}-e^{56}),e^1(e^{34}-e^{56})=-\p_+(e^{46}),e^3(e^{12}-e^{56})=2\p_+(e^{26}),\\
&e^5(e^{12}-e^{34})=2\p_+(e^{24}).\end{split}\]
\end{enumerate}
\end{prop}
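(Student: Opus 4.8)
The plan is to reduce every assertion to finite‑dimensional linear algebra on $\bigwedge^*\frg^*$, organized around the single structural fact that among the generators only $e^4$ and $e^6$ fail to be $\ud$‑closed, with $\ud e^4=e^{15}$ and $\ud e^6=e^{13}$. First I would tabulate $\ud$ completely on $1$‑, $2$‑, and $3$‑forms. Since $\ud$ is a derivation and $e^1,e^2,e^3,e^5$ are closed, $\ud$ of a monomial is nonzero only when it contains $e^4$ or $e^6$, and then one substitutes $e^{15}$ or $e^{13}$ and discards terms with a repeated factor. On $2$‑forms this leaves only the values $\ud e^{24}=e^{125}$, $\ud e^{34}=e^{135}=-\ud e^{56}$, $\ud e^{26}=e^{123}$, and $\ud e^{46}=e^{156}-e^{134}$; on the adapted $3$‑form basis of (\ref{generalphi}) the only nonzero image among the eight ``pure'' generators is $\ud e^{246}=e^{1256}-e^{1234}$, together with $\ud(e^2(e^{34}-e^{56}))=2e^{1235}$, $\ud(e^4(e^{12}\pm e^{56}))=\pm e^{1345}$, and $\ud(e^6(e^{12}\pm e^{34}))=\pm e^{1356}$.

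For parts (a) and (c) I would read off kernels and images directly. The six nonzero $3$‑form images span only the $4$‑dimensional subspace $\langle e^{1256}-e^{1234},\,e^{1235},\,e^{1345},\,e^{1356}\rangle$ of $\bigwedge^4\frg^*$, so the kernel of $\ud$ on the $20$‑dimensional $\bigwedge^3\frg^*$ is $16$‑dimensional; its natural basis consists of the $14$ closed adapted generators plus the two closed combinations $e^{124}=e^4\wedge e^{12}$ and $e^{126}=e^6\wedge e^{12}$ (these are $\ud$‑closed even though $e^4(e^{12}\pm e^{56})$ and $e^6(e^{12}\pm e^{34})$ are not), which is exactly the list in (a). Part (c) is the span of $\ud(\bigwedge^2\frg^*)$, namely $\langle e^{123},\,e^{125},\,e^{135},\,e^{134}-e^{156}\rangle$. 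Parts (b) and (d) then follow by intersecting with primitivity $\omega\wedge\varphi=0$: since $e^{135}$ and $e^1(e^{34}-e^{56})=e^{134}-e^{156}$ are already primitive while $e^{123}$ and $e^{125}$ have linearly independent nonzero images under $\omega\wedge\cdot$, the primitive part of each space is cut out cleanly.

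For the symplectic statements (e)--(g) I would use the decomposition $\ud=\p_++\omega\wedge\p_-$ on primitive forms together with $\p_+\p_-=\ud\Lambda_\omega\ud$. The operator $\p_+$ on a primitive $2$‑form $\beta$ is the primitive part of the $3$‑form $\ud\beta$, which I extract by the Lefschetz projection $\ud\beta\mapsto\ud\beta-\tfrac12\,\omega\wedge\Lambda_\omega(\ud\beta)$ (the constant $\tfrac12$ coming from $\Lambda_\omega(\omega\wedge\nu)=(m-\deg\nu)\nu$ with $m=3$ and $\deg\nu=1$). Applying this to $e^{24},e^{46},e^{26}$ and to the primitive $2$‑form $e^{34}-e^{56}$ produces precisely the four generators of part (g), while every other primitive $2$‑form has vanishing $\p_+$. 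For (e) and (f) I would evaluate $\p_+\p_-=\ud\Lambda_\omega\ud$ on the fourteen primitive $3$‑form generators: the seven closed pure generators and the three closed combinations are annihilated immediately, the three non‑closed combinations $e^2(e^{34}-e^{56})$, $e^4(e^{12}-e^{56})$, $e^6(e^{12}-e^{34})$ satisfy $\Lambda_\omega\ud(\cdot)\in\{e^{35},e^{15},e^{13}\}$, all closed, so they too are killed, whereas $\ud\Lambda_\omega\ud(e^{246})=\ud(e^{56}-e^{34})=-2e^{135}$. This simultaneously yields the $13$‑dimensional kernel of (e) and the one‑dimensional image of (f).

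The only genuinely delicate point is the faithful implementation of the symplectic operators, i.e. computing $\Lambda_\omega$ on $4$‑forms and correctly splitting a $3$‑form into its $\clp^3$ and $\omega\wedge\frg^*$ parts; everything else is bookkeeping of wedge‑product signs. I would therefore isolate the two identities used most often, namely the commutation relation $\Lambda_\omega(\omega\wedge\nu)=(m-\deg\nu)\nu$ that fixes the projection constant and the explicit contraction of $\omega$ against $4$‑forms, and verify each once; after that, parts (a)--(g) follow by assembling the table entries computed above.
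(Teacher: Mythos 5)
Your proposal is correct and follows essentially the same route as the paper, which simply declares the proposition ``straightforward to obtain'' and leaves exactly this direct computation implicit: tabulating $\ud$ from $\ud e^4=e^{15}$, $\ud e^6=e^{13}$, reading off kernels and images, and then applying the Lefschetz projection with the constant $\tfrac12$ to extract $\p_+$ and $\p_+\p_-=\ud\Lambda_\omega\ud$. I spot-checked your key values ($\ud e^{246}=e^{1256}-e^{1234}$, the two extra closed forms $e^{124},e^{126}$, and $\ud\Lambda_\omega\ud(e^{246})=-2e^{135}$) and they all agree with the stated proposition.
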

As a consequence, we conclude that
\begin{prop}\label{prop1b}~\\
\begin{enumerate}
\item $\textrm{H}^3(\frg;\rr)$ has a basis
\[e^{136},e^{145},e^{146},e^{235},e^{236},e^{245},e^2(e^{34}+e^{56}),e^{134}=e^{156},e^{124},e^{126},e^{356},e^{345}.\]
\item $\textrm{PH}^3(\frg;\rr)$ has a basis
\[e^{136},e^{145},e^{146},e^{235},e^{236},e^{245},e^3(e^{12}-e^{56}),e^5(e^{12}-e^{34}).\]
\item $\textrm{SH}_+^3(\frg;\rr)$ has a basis
\[e^{136},e^{145},e^{146},e^{235},e^{236},e^{245},e^2(e^{34}-e^{56}),e^4(e^{12}-e^{56}),e^6(e^{12}-e^{34}).\]
\item $\textrm{SH}_-^3(\frg;\rr)$ has a basis
\[e^{136},e^{145},e^{146},e^{235},e^{236},e^{245},e^1(e^{34}-e^{56}),e^3(e^{12}-e^{56}),e^5(e^{12}-e^{34}).\]
\end{enumerate}
\end{prop}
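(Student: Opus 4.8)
The plan is to obtain all four groups mechanically from the explicit kernel and image data already recorded in Proposition \ref{prop1a}, reducing the entire statement to linear algebra over the displayed monomial bases. By the definitions in Section 2, $\textrm{H}^3(\frg;\rr)$ is the quotient of the $\ud$-closed $3$-forms of Proposition \ref{prop1a}(a) by the $\ud$-exact ones of (c); $\textrm{PH}^3(\frg;\rr)$ is the quotient of the closed primitive forms (b) by the exact primitive forms (d); $\textrm{SH}^3_+(\frg;\rr)$ is the quotient of the $\p_+\p_-$-closed primitive forms (e) by the $\p_+$-exact primitive forms (g); and, invoking the identity $\ker\p_-=\ker\ud\cap\clp^3\frg^*$ from Section 2, $\textrm{SH}^3_-(\frg;\rr)$ is the quotient of the closed primitive forms (b) by the $\p_+\p_-$-exact forms (f). In every case $\ud^2=0$, respectively the Tseng--Yau relations $\p_+^2=\p_-^2=\p_+\p_-+\p_-\p_+=0$, ensures that the denominator lies inside the numerator, so that each quotient dimension is computed by subtraction.

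I would dispose of the three primitive and symplectic cohomologies first, since there the exact generators are literally a sub-collection of the closed generators. For $\textrm{PH}^3$ the two forms $e^{135}$ and $e^1(e^{34}-e^{56})$ of (d) already appear in the closed basis (b), so deleting them leaves an eight-element basis; for $\textrm{SH}^3_+$ the four $\p_+$-exact forms of (g) appear among the thirteen generators of (e), so deleting them leaves a nine-element basis; and for $\textrm{SH}^3_-$ deleting the single form $e^{135}$ of (f) from (b) leaves a nine-element basis. The counts $10-2$, $13-4$, $10-1$ confirm that no relations are overlooked, and the resulting bases are exactly those claimed in (b), (c), (d).

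The de Rham group $\textrm{H}^3$ is the only case requiring genuine bookkeeping, and this is where I expect the main, if modest, difficulty. The obstacle is that two exact generators in (c), namely $e^{123}=\ud(e^{26})$ and $e^{125}=\ud(e^{24})$, are not themselves displayed basis vectors of (a) but nontrivial combinations of them, so one cannot simply delete monomials. I would instead perform an invertible change of basis on the sixteen-dimensional space of closed $3$-forms that converts the four exact forms into coordinate vectors: replace the pairs $e^3(e^{12}\pm e^{56})$, $e^5(e^{12}\pm e^{34})$, and $e^1(e^{34}\pm e^{56})$ by $\{e^{123},e^{356}\}$, $\{e^{125},e^{345}\}$, and $\{e^1(e^{34}-e^{56}),e^{156}\}$ respectively, each an invertible substitution within the relevant two-dimensional subspace. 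After this change the exact subspace is spanned by the four coordinate vectors $e^{135}$, $e^1(e^{34}-e^{56})$, $e^{123}$, $e^{125}$, so that the remaining twelve vectors descend to a basis of the quotient: the six untouched monomials $e^{136},e^{145},e^{146},e^{235},e^{236},e^{245}$, together with $e^2(e^{34}+e^{56})$, $e^{124}$, $e^{126}$, $e^{356}$, $e^{345}$, and $e^{156}$. The displayed identification $e^{134}=e^{156}$ then records that $e^{134}-e^{156}=e^1(e^{34}-e^{56})$ is exact. Carrying out and checking this single change of basis, which makes the linear independence of the chosen representatives transparent, is the crux of the argument.
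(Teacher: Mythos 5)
Your proposal is correct and is essentially the paper's own argument: the paper derives Proposition \ref{prop1b} directly from the explicit kernel/image data of Proposition \ref{prop1a} by exactly the quotient computations you describe, including the change of basis replacing the pairs $e^3(e^{12}\pm e^{56})$, $e^5(e^{12}\pm e^{34})$, $e^1(e^{34}\pm e^{56})$ by $\{e^{123},e^{356}\}$, $\{e^{125},e^{345}\}$, $\{e^1(e^{34}-e^{56}),e^{156}\}$ in the de Rham case. Your dimension counts $16-4=12$, $10-2=8$, $13-4=9$, $10-1=9$ all match the displayed bases.
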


Secondly, from Proposition \ref{prop1a}(a), we know that for $\varphi$ taking the form of (\ref{generalphi}), it is $\ud$-closed if and only if
\[H=J=0,\quad L=R,\quad N=T.\]
Consequently, the 3-form $\varphi$ being $F$-harmonic is equivalent to the system
\[\begin{cases}&H=J=0,\quad L=R,\quad N=T,\\
&\hat H=\hat J=0,\quad \hat L=\hat R,\quad \hat N=\hat T.\end{cases}\]
By applying formulae in Lemma \ref{f}, the $F$-harmonicity of $\varphi$ is reduced to the following system of polynomial equations
\begin{equation}\label{alg1}
\begin{cases}&D(P^2+FG)=0,\\
&I(P^2+FG)+T[G(K-Q)-P(M-S)]+R[P(K-Q)+F(M-S)]=0,\\
&D[G(K-Q)-P(M-S)]=0,\\
&D[P(K-Q)+F(M-S)]=0.
\end{cases}
\end{equation}
For the next step, we notice that the parameters for $\textrm{H}^3(\frg;\rr)$ are $\{B,C,D,E,F,G,K-Q,M-S,O,P,R,T\}$ and the parameters in a fixed de Rham cohomology class are
$\{A,I,K+Q,M+S\}$, therefore there is no uniqueness result. The existence of $F$-harmonic form depends on the de Rham cohomology class. The loci of cohomology classes admitting an $F$-harmonic representative is the disjoint union of the following quasi-affine varieties in $\textrm{H}^3(\frg;\rr)$
\[\{D=0,P^2+FG\neq 0\}\]
and
\[\{D=P^2+FG=T[G(K-Q)-P(M-S)]+R[P(K-Q)+F(M-S)]=0\}\]
and
\[\left\{D\neq 0,\textrm{Rank}\begin{bmatrix} G & -P & M-S \\ P & F & K-Q \end{bmatrix}\leq 1\right\}.\]
 
In particular, there is an affine variety $V\subset\textrm{H}^3(\frg;\rr)\cong\rr^{12}$ defined by
\[\begin{split}V:=&\{D=P^2+FG=T[G(K-Q)-P(M-S)]+R[P(K-Q)+F(M-S)]=0\}\\
\coprod&\left\{\textrm{Rank}\begin{bmatrix} G & -P & M-S \\ P & F & K-Q \end{bmatrix}\leq 1\right\}\end{split}\]
such that for any cohomology class $[\varphi]\in V$, all representatives of $[\varphi]$ are $\clf$-harmonic.

Inspired by the above calculation, we introduce the following definitions.
\begin{dfn}~
\begin{enumerate}
\item The \emph{$\clf$-harmonic locus} $L_\clf$ of a 6-dimensional symplectic Lie algebra $(\frg,\omega)$ (with respect to the de Rham/Lie algebra cohomology) is the subset of $\textrm{H}^3(\frg;\rr)$ consisting of cohomology classes that admit an $\clf$-harmonic representative.
\item The \emph{perfect $\clf$-harmonic locus} $PL_\clf$ of a 6-dimensional symplectic Lie algebra $(\frg,\omega)$ (with respect to the de Rham/Lie algebra cohomology) is the subset of $\textrm{H}^3(\frg;\rr)$ consisting of cohomology classes such that all representatives are $\clf$-harmonic.
\end{enumerate}
\end{dfn}
\begin{rmk}
Clearly by definition we have that $PL_\clf\subset L_\clf\subset\textrm{H}^3(\frg;\rr)$. In addition, one can show that $L_\clf$ is always a quasi-affine variety, and $PL_\clf$ is always an affine variety. Furthermore, one can define similar notions for other cohomologies including $\textrm{PH}^3(\frg;\rr)$ and $\textrm{SH}_\pm^3(\frg;\rr)$. In this example, we can characterize $PL_\clf$ and $L_\clf$ by
\begin{enumerate}
\item $[\varphi]\in PL_\clf$ if and only if $\clq$ is a constant on $[\varphi]$.
\item $[\varphi]\in L_\clf\setminus PL_\clf$ if and only if $\clq$ achieves a strict extremal value in $[\varphi]$ at any of its $\clf$-harmonic representatives.
\end{enumerate}
\end{rmk}

When $\varphi$ is primitive, by setting $O=P=Q=R=S=T=0$ in (\ref{alg1}), we get a simplified system of polynomial equations
\begin{equation}
DFG=IFG=DFM=DGK=0
\end{equation}
with $H=J=L=N=0$. In this case
\[\frac{\clq(\varphi)}{4}=2(BG+CF+DE)^2-(BG+CF+DE)^2\geq 0\]
since $DFG=0$. 

Summarizing results in Proposition \ref{prop1a} and \ref{prop1b}, we obtain the following table\\

\begin{tabular}{c|c|c|c}
& cohomology param. & exact param. & $\clf$-harmonic locus\\
\hline
$\textrm{PH}^3(\frg;\rr)$ & $B,C,D,E,F,G,K,M$ & $A,I$ & $DFG=DFM=DGK=0$\\
\hline
$\textrm{SH}_+^3(\frg;\rr)$ & $B,C,D,E,F,G$ & $A,I,K,M$ & $DFG=0$\\
\hline
$\textrm{SH}_-^3(\frg;\rr)$ & $B\!,\!C,\!D\!,\!E,\!F\!,\!G\!,\!I\!,\!K\!,\!M$ & $A$ & $DFG\!=\!IFG\!=\!DFM\!=\!DGK\!=\!0$
\end{tabular}\\

In any of the three cohomologies $\textrm{PH}^3(\frg;\rr)$ or $\textrm{SH}^3_\pm(\frg;\rr)$, $A$ is always a parameter of the exact forms, therefore we can never have uniqueness in a fixed cohomology class. Meanwhile as a consequence of above calculation, we have the following curious fact.
\begin{cor}
If $[\varphi]\in\textrm{SH}^3_-(\frg;\rr)$ admits an $F$-harmonic representative, then every representative of $[\varphi]$ is $\clf$-harmonic.
\end{cor}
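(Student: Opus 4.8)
The plan is to read the statement off directly from the explicit parametrizations already established, with essentially no new computation. The crux is that within a single class of $\textrm{SH}_-^3(\frg;\rr)$ the only coordinate allowed to vary is precisely the one absent from the $\clf$-harmonicity equations.

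First I would recall that $\textrm{SH}_-^3(\frg;\rr)=\ker\p_-/\img(\p_+\p_-)$ and that $\ker\p_-=\ker\ud\cap\clp^3\frg^*$, so every representative of a class in $\textrm{SH}_-^3(\frg;\rr)$ is automatically a $\ud$-closed primitive 3-form. By Proposition \ref{prop1a}(b), such a form is parametrized by $(A,B,C,D,E,F,G,I,K,M)$ with $H=J=L=N=0$. By Proposition \ref{prop1a}(f), the image $\img(\p_+\p_-)$ on primitive 3-forms is spanned by the single vector $e^{135}=-\tfrac{1}{2}\p_+\p_-(e^{246})$, that is, it is exactly the coordinate direction $A$. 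Consequently, within one fixed $\textrm{SH}_-^3(\frg;\rr)$-class the coordinate $A$ is the unique free parameter, while $B,C,D,E,F,G,I,K,M$ are class invariants.

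Next I would invoke the $\clf$-harmonic locus already computed: for primitive $\varphi$, setting $O=P=Q=R=S=T=0$ in (\ref{alg1}) reduces $\clf$-harmonicity to the system $DFG=IFG=DFM=DGK=0$. The decisive observation is that each of these polynomial relations involves only the variables $D,F,G,I,K,M$, every one of which is a class invariant; in particular, $A$ does not appear in any of them.

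Therefore $\clf$-harmonicity is a property of the class in $\textrm{SH}_-^3(\frg;\rr)$, not of the chosen representative. If some representative satisfies $DFG=IFG=DFM=DGK=0$, then because these equations are independent of $A$ they persist when $A$ is varied arbitrarily, so every representative of the same class is $\clf$-harmonic. There is no genuine analytic obstacle here; the only point deserving care is the verification that $\img(\p_+\p_-)$ on primitive 3-forms is one-dimensional and spanned by $e^{135}$, since this is exactly what guarantees that no coordinate other than $A$ can move within a class.
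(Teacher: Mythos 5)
Your proposal is correct and follows essentially the same route as the paper: the paper's table for $\textrm{SH}_-^3(\frg;\rr)$ records exactly that the only exact parameter is $A$ (via Proposition \ref{prop1a}(f)) while the $\clf$-harmonic locus $DFG=IFG=DFM=DGK=0$ involves none of the exact parameters, so harmonicity is a class invariant. You have merely made explicit the reasoning the paper compresses into the phrase ``as a consequence of above calculation.''
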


In fact, we can compute the perfect $\clf$-harmonic locus of $(\frg,\omega)$ with respect to each of the cohomology groups above, as summarized in the following table\\

\begin{tabular}{c|c|c|c}
& cohomology param. & exact param. & perfect $\clf$-harmonic locus\\
\hline
$\textrm{PH}^3(\frg;\rr)$ & $B,C,D,E,F,G,K,M$ & $A,I$ & $FG=DFM=DGK=0$\\
\hline
$\textrm{SH}_+^3(\frg;\rr)$ & $B,C,D,E,F,G$ & $A,I,K,M$ & $FG=DF=DG=0$\\
\hline
$\textrm{SH}_-^3(\frg;\rr)$ & $B\!,\!C,\!D\!,\!E,\!F\!,\!G\!,\!I\!,\!K\!,\!M$ & $A$ & $DFG\!=\!IFG\!=\!DFM\!=\!DGK\!=\!0$
\end{tabular}\\

\noindent For Question 2, we notice that $\textrm{im}~\ud\Lambda_\omega\ud$ is a 1-dimensional space spanned by $e^{135}$, whose coefficient in (\ref{generalphi}) is $A$. With notations from (\ref{generalphi}) and (\ref{f}), the Type IIA flow  reduces to an ODE
\begin{equation}\label{ode}
\p_t A=4\hat H=-4AH^2+\mathcal{R},
\end{equation}
where
\[\begin{split}&\mathcal{R}=4H(BG+CF+DE-2IJ-2KL-2MN+2OP+2QR-2ST)\\
&+8\left[D(J^2-P^2)+F(L^2-R^2)+G(N^2-T^2)-DFG-2JLN-2PLT+2JRT+2PRN\right],\end{split}\]
and $B,C,\dots,S,T$ are constants depending only on initial data.
The solution of (\ref{ode}) is 
\[A(t)=e^{-4H^2t}\left[\frac{\mathcal{R}(e^{4H^2t}-1)}{4H^2}+A(0)\right].\]
When $H=0$, the above expression should be understood as $A(t)=A(0)+\mathcal{R}t$. We see that the flow always has long time existence and it converges if and only if $H\neq 0$ or $H=\mathcal{R}=0$.

The divergent case is that $H=0$ and $\mathcal{R}\neq 0$, where we have
\[\lim_{t\to\infty}\frac{\varphi(t)}{\mathcal{R}t}=e^{135}=:\varphi_{\infty}.\] 
In this case $\ker\varphi_\infty=\spann\{e_2,e_4,e_6\}$ defines a Lagrangian foliation on the nilmanifold. In fact, it gives rise to a Lagrangian fibration since the Lagrangian subspace $\spann\{e_2,e_4,e_6\}$ of the Lie algebra is an ideal and it corresponds to a normal Lagrangian subgroup of the nilpotent Lie group. This verifies the metrical picture that along the Type IIA flow, the fibers of this Lagrangian fibration collapses and the manifold after scaling converges to its base in the sense of Gromov-Hausdorff. 

When $H\neq 0$, we have 
\[\lim_{t\to\infty}A(t)=\frac{\mathcal{R}}{4H^2}.\]
In other words, $\varphi(t)$ converges to a left-invariant 3-form with $\hat H=0$, which is a stationary points of the Type IIA flow.

\subsection{A solvable example}

Next, let us investigate the solvmanifold in \cite[Theorem 4.5]{tomassini2008} and \cite[pp. 799-802]{fei2021b} in greater generality, where the Lie algebra of the 6-dimensional solvable Lie group is 
\[(-\lambda e^{15},\lambda e^{25},-\lambda e^{36},\lambda e^{46},0,0)\]
with $\lambda=\log\dfrac{3+\sqrt{5}}{2}$ and the standard symplectic form $e^{12}+e^{34}+e^{56}$. By choosing a rescaled basis, it is equivalent to work with
\[(-e^{15},e^{25},-e^{36},e^{46},0,0)\]
with $\omega=\dfrac{1}{\lambda^2}(e^{12}+e^{34}+e^{56})$.\\
~\\
\noindent As before, it is straightforward to obtain the following:
\begin{prop}\label{prop2a}~\\
\begin{enumerate}
\item The space of $\ud$-closed 3-forms is spanned by the following basis vectors
\[\begin{split}&e^{135}+e^{136},e^{145}-e^{146},e^{235}-e^{236},e^{245}+e^{246},e^5(e^{12}-e^{34}),e^6(e^{12}-e^{34}),\\ &e^5(e^{12}+e^{34}),e^6(e^{12}+e^{34}),e^{156},e^{256},e^{356},e^{456}.\end{split}\]
\item The space of $\ud$-closed primitive 3-forms is spanned by the following basis vectors
\[e^{135}+e^{136},e^{145}-e^{146},e^{235}-e^{236},e^{245}+e^{246},e^5(e^{12}-e^{34}),e^6(e^{12}-e^{34}).\]
\item The space of $\ud$-exact 3-forms is spanned by the following basis vectors
\[\begin{split}&e^{135}+e^{136}=\ud(e^{13}),e^{145}-e^{146}=\ud(e^{14}),e^{235}-e^{236}=-\ud(e^{23}),e^{245}+e^{246}=-\ud(e^{24}),\\
&e^{156}=-\ud(e^{16}),e^{256}=\ud(e^{26}),e^{356}=\ud(e^{35}),e^{456}=-\ud(e^{45}).\end{split}\]
\item The space of $\ud$-exact primitive 3-forms is spanned by the following basis vectors
\[e^{135}+e^{136}=\ud(e^{13}),e^{145}-e^{146}=\ud(e^{14}),e^{235}-e^{236}=-\ud(e^{23}),e^{245}+e^{246}=-\ud(e^{24}).\]
\item The space of $\p_+\p_-$-closed primitive 3-forms is spanned by the following vectors
\[\begin{split}&e^{135}+e^{136},e^{145}-e^{146},e^{235}-e^{236},e^{245}+e^{246},e^1(e^{34}-e^{56}),e^2(e^{34}-e^{56}),e^3(e^{12}-e^{56}),\\
&e^4(e^{12}-e^{56}),e^5(e^{12}-e^{34}),e^6(e^{12}-e^{34}).\end{split}\]
\item The space of $\p_+\p_-$-exact primitive 3-forms is spanned by the following vectors
\[\begin{split}&e^{135}+e^{136}=-\frac{1}{\lambda^2}\p_+\p_-(e^{135})=\frac{1}{\lambda^2}\p_+\p_-(e^{136}),\\
&e^{145}-e^{146}=\frac{1}{\lambda^2}\p_+\p_-(e^{145})=\frac{1}{\lambda^2}\p_+\p_-(e^{146}),\\
&e^{235}-e^{236}=\frac{1}{\lambda^2}\p_+\p_-(e^{235})=\frac{1}{\lambda^2}\p_+\p_-(e^{236}),\\
&e^{245}+e^{246}=-\frac{1}{\lambda^2}\p_+\p_-(e^{245})=\frac{1}{\lambda^2}\p_+\p_-(e^{246}).\end{split}\]
\item The space of $\p_+$-exact primitive 3-forms is spanned by the following vectors
\[\begin{split}&e^{135}\!+\!e^{136}=\p_+(e^{13}),e^{145}\!-\!e^{146}=\p_+(e^{14}),e^{235}\!-\!e^{236}=-\p_+(e^{23}),e^{245}\!+\!e^{246}=-\p_+(e^{24}),\\
&e^1(e^{34}-e^{56})=2\p_+(e^{16}),e^2(e^{34}-e^{56})=-2\p_+(e^{26}),e^3(e^{12}-e^{56})=-2\p_+(e^{35}),\\
&e^4(e^{12}-e^{56})=2\p_+(e^{45}).\end{split}\]
\end{enumerate}
\end{prop}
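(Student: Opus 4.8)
The entire proposition is a finite computation inside the $20$-dimensional space $\bigwedge^3\frg^*$ together with its neighbors $\bigwedge^2\frg^*$ and $\bigwedge^4\frg^*$, so the plan is to set up the relevant linear maps explicitly and then extract kernels, images, and their primitive parts. I would begin by extending the structure equations $\ud e^1=-e^{15}$, $\ud e^2=e^{25}$, $\ud e^3=-e^{36}$, $\ud e^4=e^{46}$, $\ud e^5=\ud e^6=0$ through the graded Leibniz rule to obtain $\ud$ on all $15$ basis $2$-forms and all $20$ basis $3$-forms, recording the matrices of $\ud\colon\bigwedge^2\frg^*\to\bigwedge^3\frg^*$ and $\ud\colon\bigwedge^3\frg^*\to\bigwedge^4\frg^*$. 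Two structural features organize and cross-check the work: first, $\frg$ is the direct sum of the two isomorphic $3$-dimensional solvable ideals $\frg_1=\spann\{e_1,e_2,e_5\}$ and $\frg_2=\spann\{e_3,e_4,e_6\}$ (there are no cross brackets), so the de Rham cohomology factors by the K\"unneth formula; second, the linear involution $\sigma$ given by $e^1\leftrightarrow e^3$, $e^2\mapsto-e^4$, $e^4\mapsto-e^2$, $e^5\leftrightarrow e^6$ is a differential-graded automorphism with $\sigma^*\omega=-\omega$, hence it is anti-symplectic, preserves primitivity and all the operators involved up to sign, and permutes the asserted basis vectors in symmetric pairs.

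For parts (a)--(d) I would argue by de Rham linear algebra alone. Part (a) is the kernel of $\ud$ on $\bigwedge^3\frg^*$ and part (c) is the image of $\ud$ on $\bigwedge^2\frg^*$, both read off from the matrices above; the listed identities such as $e^{135}+e^{136}=\ud e^{13}$ and $e^{156}=-\ud e^{16}$ both exhibit explicit primitives and serve as verification. Parts (b) and (d) are the intersections of (a) and (c) with the primitive $3$-forms: writing a closed (respectively exact) $3$-form in the form (\ref{generalphi}) and imposing the primitivity condition $\omega\wedge\varphi=0$, equivalently the vanishing of the non-primitive coefficients $O,P,Q,R,S,T$, immediately truncates the bases to those stated.

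Parts (e)--(g) require the Tseng--Yau operators $\p_\pm$, which I would not compute from their definitions but instead extract from $\ud$ via the Lefschetz decomposition $\bigwedge^\bullet\frg^*=\clp^\bullet\frg^*\oplus(\omega\wedge\clp^{\bullet-2}\frg^*)\oplus\cdots$. For a primitive form $\alpha$ one has $\ud\alpha=\p_+\alpha+\omega\wedge\p_-\alpha$ with $\p_+\alpha$ primitive, so $\p_+\alpha$ is simply the primitive component of $\ud\alpha$. Thus for (g) I would run each primitive $2$-form through $\ud$ and retain its primitive part; some, like $e^{13}$, already have primitive differential, giving $\p_+e^{13}=e^{135}+e^{136}$, while others, like $e^{16}$, require splitting off an $\omega\wedge e^1$ term, after which $\p_+e^{16}=\tfrac12\,e^1(e^{34}-e^{56})$ emerges. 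For (e) and (f) I would instead use the identity $\p_+\p_-=\ud\Lambda_\omega\ud$ recorded in Section~2, compute this single composite operator on $\clp^3\frg^*$, and take its kernel (for (e)) and image (for (f)); since $\Lambda_\omega$ carries the factor $\lambda^2$ coming from $\omega=\tfrac{1}{\lambda^2}(e^{12}+e^{34}+e^{56})$, this is exactly the origin of the constants $\pm1/\lambda^2$ displayed in (f), as in $\p_+\p_-(e^{135})=-\lambda^2(e^{135}+e^{136})$.

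The computations are elementary but voluminous, and I expect the main obstacle to be bookkeeping rather than any conceptual difficulty: correctly performing the Lefschetz decomposition of each differential into its primitive part and its $\omega$-divisible part, and faithfully tracking the scaling induced by the normalization of $\omega$ so that the constants in (f) and (g) come out exactly. Because $\p_+$ and $\p_-$ are defined only relative to this splitting, a single error there contaminates the entire symplectic half of the proposition; the anti-symplectic involution $\sigma$ from the first paragraph is the most efficient safeguard, since it forces the answers to occur in symmetric pairs and any break in that symmetry signals a slip.
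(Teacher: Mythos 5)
Your proposal is correct and matches the paper's approach: the paper offers no argument beyond ``it is straightforward to obtain,'' and what you describe --- computing $\ud$ on the bases of $\bigwedge^2\frg^*$ and $\bigwedge^3\frg^*$, reading off kernels and images, and extracting $\p_+$ and $\p_+\p_-=\ud\Lambda_\omega\ud$ via the Lefschetz splitting --- is exactly that computation, with the normalization $\omega=\tfrac{1}{\lambda^2}(e^{12}+e^{34}+e^{56})$ correctly identified as the source of the $\pm1/\lambda^2$ factors in (f). Your two organizing devices (the splitting $\frg=\frg_1\oplus\frg_2$ with K\"unneth, and the anti-symplectic involution $\sigma$) are sound extras not present in the paper; spot-checks such as $\ud(e^{13})=e^{135}+e^{136}$, $\p_+(e^{16})=\tfrac12 e^1(e^{34}-e^{56})$, and $\p_+\p_-(e^{135})=-\lambda^2(e^{135}+e^{136})$ all confirm the stated formulas.
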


As a consequence, a general closed 3-form $\varphi$ takes the form
\bea
\varphi&=&A(e^{135}\!+\!e^{136})\!+\!C(e^{145}\!-\!e^{146})\!+\!E(e^{235}\!-\!e^{236})\!+\!G(e^{245}\!+\!e^{246})\!+\!(Me^5\!+\!Ne^6)(e^{12}\!-\!e^{34})\nonumber\\
&&+(Se^5+Te^6)(e^{12}+e^{34})+2Oe^{156}+2Pe^{256}+2Qe^{356}+2Re^{456},\label{phi2}
\eea
meaning that in (\ref{generalphi}) we have
\begin{equation}
A-B=C+D=E+F=G-H=I+O=J+P=K+Q=L+R=0.\label{closed} 
\end{equation}
After setting $O=P=Q=R=S=T=0$ in (\ref{phi2}), we get the expression of a general closed and primitive 3-form
\begin{equation}
\varphi=A(e^{135}\!+\!e^{136})\!+\!C(e^{145}\!-\!e^{146})\!+\!E(e^{235}\!-\!e^{236})\!+\!G(e^{245}\!+\!e^{246})\!+\!(Me^5\!+\!Ne^6)(e^{12}\!-\!e^{34}).
\end{equation}

As a consequence, we conclude that
\begin{prop}\label{prop2b}~\\
\begin{enumerate}
\item $\textrm{H}^3(\frg;\rr)$ has a basis
\[e^5(e^{12}-e^{34}),e^6(e^{12}-e^{34}),e^5(e^{12}+e^{34}),e^6(e^{12}+e^{34}).\]
\item $\textrm{PH}^3(\frg;\rr)$ has a basis
\[e^5(e^{12}-e^{34}),e^6(e^{12}-e^{34}).\]
\item $\textrm{SH}_+^3(\frg;\rr)$ has a basis
\[e^5(e^{12}-e^{34}),e^6(e^{12}-e^{34}).\]
\item $\textrm{SH}_-^3(\frg;\rr)$ has a basis
\[e^5(e^{12}-e^{34}),e^6(e^{12}-e^{34}).\]
\end{enumerate}
\end{prop}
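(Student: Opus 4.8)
The plan is to obtain all four groups directly as quotients of a space of cycles by a space of boundaries, with both spaces already made explicit in Proposition \ref{prop2a}; no new computation with $\ud$, $\p_+$, or $\p_-$ is needed beyond matching the lists against the definitions recorded in Section 2.

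For $\textrm{H}^3(\frg;\rr)$ I would take the $12$-dimensional space of $\ud$-closed $3$-forms from Proposition \ref{prop2a}(a) and quotient by the $8$-dimensional space of $\ud$-exact $3$-forms from Proposition \ref{prop2a}(c). Since the eight exact generators form a subset of the twelve closed generators, the quotient is $4$-dimensional and is represented precisely by the remaining generators $e^5(e^{12}\pm e^{34})$, $e^6(e^{12}\pm e^{34})$. The same bookkeeping gives (b): quotient the $6$-dimensional space of closed primitive $3$-forms in Proposition \ref{prop2a}(b) by the $4$-dimensional space of exact primitive $3$-forms in Proposition \ref{prop2a}(d), leaving the two classes $e^5(e^{12}-e^{34})$, $e^6(e^{12}-e^{34})$.

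For the symplectic cohomologies I would use the definitions $\textrm{SH}_+^3=\ker(\p_+\p_-)/\textrm{im}(\p_+)$ and $\textrm{SH}_-^3=\ker(\p_-)/\textrm{im}(\p_+\p_-)$. In the $+$ case I take the $10$-dimensional $\p_+\p_-$-closed space of Proposition \ref{prop2a}(e) modulo the $8$-dimensional $\p_+$-exact space of Proposition \ref{prop2a}(g); the eight exact generators are exactly the first eight of the ten closed generators, and the two survivors are $e^5(e^{12}-e^{34})$, $e^6(e^{12}-e^{34})$. For $\textrm{SH}_-^3$ the one point needing care is the numerator: using the identity $\ker\p_-=\ker\ud\cap\clp^3\frg^*$ recorded in Section 2, $\ker\p_-$ is exactly the space of closed primitive $3$-forms, i.e.\ Proposition \ref{prop2a}(b), which is $6$-dimensional; quotienting by the $4$-dimensional $\p_+\p_-$-exact space of Proposition \ref{prop2a}(f) again leaves $e^5(e^{12}-e^{34})$, $e^6(e^{12}-e^{34})$.

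The only thing left to verify is that the proposed survivors are genuinely independent modulo the boundary space, rather than merely matching a dimension count. This is immediate and is not a real obstacle: every generator listed in Proposition \ref{prop2a} is a fixed $\pm$-combination of standard monomials $e^{ijk}$ with pairwise disjoint monomial supports, so the boundary generators together with the claimed representatives form part of a single linearly independent family in $\bigwedge^3\frg^*$. Hence the claimed classes are linearly independent in each quotient, and the dimension count shows they span. The genuine content of the proposition is therefore entirely contained in Proposition \ref{prop2a}; here it is only a matter of correctly pairing each cycle space with its boundary space and, for $\textrm{SH}_-^3$, of recalling that $\ker\p_-$ coincides with the closed primitive forms.
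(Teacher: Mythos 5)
Your proposal is correct and takes essentially the same route as the paper, which presents Proposition \ref{prop2b} as an immediate consequence of the explicit cycle and boundary spaces in Proposition \ref{prop2a} (including the identification $\ker\p_-=\ker\ud\cap\clp^3\frg^*$ for the $\textrm{SH}_-^3$ case). One tiny quibble: the generators do not all have pairwise disjoint monomial supports --- $e^5(e^{12}-e^{34})$ and $e^5(e^{12}+e^{34})$ share the support $\{e^{125},e^{345}\}$, and likewise for the $e^6$ pair --- but these pairs are still visibly linearly independent and their supports are disjoint from those of the boundary generators, so your independence check goes through unchanged.
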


By above calculation, it is clear that the $\clf$-harmonicity condition translates to

\[\begin{cases}&A-B=C+D=E+F=G-H=I+O=J+P=K+Q=L+R=0,\\
&\hat A-\hat B=\hat C+\hat D=\hat E+\hat F=\hat G-\hat H=\hat I+\hat O=\hat J+\hat P=\hat K+\hat Q=\hat L+\hat R=0.\end{cases}\]

By applying formulae in Lemma \ref{f}, the $F$-harmonicity of $\varphi$ is reduced to the following system of polynomial equations
\begin{equation}\label{alg2}
\begin{cases}&A(4CE-(M-N)^2+(S-T)^2)=0,\\
&C(4AG+(M+N)^2-(S+T)^2)=0,\\
&E(4AG+(M+N)^2-(S+T)^2)=0,\\
&G(4CE-(M-N)^2+(S-T)^2)=0,\\
\end{cases}
\end{equation}
As $M,N,S,T$ are parameters for $\textrm{H}^3(\frg;\rr)$ and $A,C,E,G$ are parameters for exact forms, it follows that every cohomology class has an $\clf$-harmonic representative by setting $A=C=E=G=0$. On the other hand, there exists no cohomology class such that every representative is $\clf$-harmonic. Moreover, we never have the uniqueness. In summary, we have that
\[\varnothing=PL_\clf\subset L_\clf=\textrm{H}^3(\frg;\rr)\cong\rr^4.\]

The situation with primitive $\varphi$ is similar: we simply set $S=T=0$ in (\ref{alg2}) and $M,N$ are cohomology parameters while $A,C,E,G$ are exact form parameters. All three cohomologies $\textrm{PH}^3(\frg;\rr)$ and $\textrm{SH}^3_\pm(\frg;\rr)$ are all isomorphic and we have that \[\varnothing=PL_\clf\subset L_\clf=\textrm{PH}^3(\frg;\rr)=\textrm{SH}^3_\pm(\frg;\rr)\cong\rr^2.\]

\begin{prop}
If $\varphi$ is $\clf$-harmonic, then $\clq(\varphi)\geq 0$.
\end{prop}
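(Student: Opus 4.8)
The plan is to reduce everything to an explicit quartic in the eight parameters $A,C,E,G,M,N,S,T$ and then show that the four equations (\ref{alg2}) force this quartic to be a sum of squares. Since $\clf$-harmonicity presupposes $\ud\varphi=0$, I would work on the $12$-dimensional space of closed $3$-forms, i.e.\ impose (\ref{closed}).

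First I would substitute (\ref{closed}), namely $B=A$, $D=-C$, $F=-E$, $H=G$, $O=-I$, $P=-J$, $Q=-K$, $R=-L$, into the formula of Lemma \ref{q}. A substantial collapse occurs: each coefficient $I^2-O^2$, $J^2-P^2$, $K^2-Q^2$, $L^2-R^2$, $IJ-OP$, $KL-QR$, $IP-JO$, $LQ-KR$ vanishes, and the four blocks of cubic cross terms pairing $A,\dots,H$ against $I,\dots,T$ cancel in pairs. The outcome is that $\clq$ is independent of $I,J,K,L$ and reduces to
\[\frac{\clq(\varphi)}{4}=16ACEG+4(CE-AG)(M^2+N^2-S^2-T^2)+8(AG+CE)(MN-ST)+4(MT-NS)^2.\]
I would record in passing that this independence is consistent with the identities $\hat I+\hat O=\hat J+\hat P=\hat K+\hat Q=\hat L+\hat R=0$ holding automatically under (\ref{closed}) (a direct check from Lemma \ref{f}), so that (\ref{alg2}) really is the full reduced $\clf$-harmonic system.

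Next I would set $X=4CE-(M-N)^2+(S-T)^2$ and $Y=4AG+(M+N)^2-(S+T)^2$, so that (\ref{alg2}) becomes $AX=GX=0$ and $CY=EY=0$, and split into cases according to the vanishing of $X$ and $Y$. Whenever $X\neq0$ the first and fourth equations give $A=G=0$, and whenever $Y\neq0$ the middle two give $C=E=0$; in each such degenerate branch the products $AG$ and/or $CE$ vanish and, after using $X=0$ or $Y=0$ to kill the one surviving bracket, one finds $\clq(\varphi)/4=4(MT-NS)^2\geq0$.

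The crux is the case $X=Y=0$, where $4CE=(M-N)^2-(S-T)^2$ and $4AG=(S+T)^2-(M+N)^2$. Substituting these two products into the displayed formula, and writing $u=M^2+N^2-S^2-T^2$ and $v=MN-ST$, the first three terms combine to $u^2-4v^2$, giving $\clq(\varphi)/4=u^2-4v^2+4(MT-NS)^2$. The hard part is recognizing the perfect-square structure; the key is the identity
\[4(MT-NS)^2-4(MN-ST)^2=4(M^2-S^2)(T^2-N^2),\]
which turns the expression into $u^2+4(M^2-S^2)(T^2-N^2)$. Setting $p=M^2-S^2$, $q=N^2-T^2$ gives $u=p+q$ and $4(M^2-S^2)(T^2-N^2)=-4pq$, so $\clq(\varphi)/4=(p+q)^2-4pq=(p-q)^2=(M^2-N^2-S^2+T^2)^2\geq0$. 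I expect the only genuine difficulties to be the bookkeeping of the cancellations in the first step and spotting the perfect square in the last; the remaining case checks are routine.
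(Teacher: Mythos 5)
Your argument is correct and follows essentially the same route as the paper: substitute (\ref{closed}) into Lemma \ref{q}, observe that the result depends only on $A,C,E,G,M,N,S,T$, and use (\ref{alg2}) to reduce $\clq/4$ to a perfect square. The only real difference is organizational: the paper records the reduced quartic directly in the factored form $\clq/4=\bigl[4AG+(M+N)^2-(S+T)^2\bigr]\bigl[4CE-(M-N)^2+(S-T)^2\bigr]+(M^2-N^2-S^2+T^2)^2$, i.e.\ $\clq/4=XY+(M^2-N^2-S^2+T^2)^2$ in your notation, which makes your hard case $X=Y=0$ (and the perfect-square identity you had to hunt for there) immediate.
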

\begin{proof}
If $\varphi$ is closed, by plugging (\ref{closed}) in Lemma \ref{q}, we get
\[\begin{split}&\frac{\clq(\varphi)}{4}\\ =&16ACEG+4CE((M+N)^2-(S+T)^2)-4AG((M-N)^2-(S-T)^2)+4(MT-NS)^2\\
=&\left[4AG+(M+N)^2-(S+T)^2\right]\left[4CE-(M-N)^2+(S-T)^2\right]+(M^2-N^2-S^2+T^2)^2.\end{split}\]
Suppose $\varphi$ is $\clf$-harmonic, then according to (\ref{alg2}), at least one of the following three cases must happen
\[4AG+(M+N)^2-(S+T)^2=0\textrm{ or }4CE-(M-N)^2+(S-T)^2=0\textrm{ or }A=C=E=G=0.\]
In the first two cases, we have that
\[\frac{\clq(\varphi)}{4}=(M^2-N^2-S^2+T^2)^2\geq 0.\]
In the third case, we have that
\[\frac{\clq(\varphi)}{4}=4(MT-NS)^2\geq 0.\]
\end{proof}

Now let us turn to the Type IIA flow on $(\frg,\omega)$ with general initial data. With $\varphi$ in (\ref{generalphi}), Proposition \ref{prop2a}(f) implies that
\[\begin{split}\frac{1}{2\lambda^2}\ud\Lambda_\omega\ud\clf(\varphi)=&(\hat A-\hat B)(e^{135}+e^{136})-(\hat C+\hat D)(e^{145}+e^{146})\\
&-(\hat E+\hat F)(e^{235}-e^{236})+(\hat G-\hat H)(e^{245}+e^{246}).\end{split}\]
Let
\bea
\alpha=\frac{A+B}{2},&&\alpha'=\frac{A-B}{2},\nonumber\\
\beta=\frac{C-D}{2},&&\beta'=\frac{C+D}{2},\nonumber\\
\gamma=\frac{E-F}{2},&&\gamma'=\frac{E+F}{2},\nonumber\\
\delta=-\frac{G+H}{2},&&\delta'=\frac{G-H}{2},\nonumber
\eea
then we can express $\varphi$ as
\bea
\varphi&=&\alpha(e^{135}+e^{136})+\alpha'(e^{135}-e^{136})+\beta(e^{145}-e^{146})+\beta'(e^{145}+e^{146})\nonumber\\
&&+\gamma(e^{235}-e^{236})+\gamma'(e^{235}+e^{236})-\delta(e^{245}+e^{246})+\delta'(e^{245}-e^{246})\nonumber\\
&&+(Ie^1+Je^2)(e^{34}-e^{56})+(Ke^3+Le^4)(e^{12}-e^{56})+(Me^5+Ne^6)(e^{12}-e^{34})\nonumber\\
&&+(Oe^1+Pe^2)(e^{34}+e^{56})+(Qe^3+Re^4)(e^{12}+e^{56})+(Se^5+Te^6)(e^{12}+e^{34}).\nonumber
\eea

It follows that the Type IIA flow reduces to the ODE system
\bea
\begin{cases}
\pt_t\alpha&=2\lambda^2(\hat A-\hat B),\\
\pt_t\beta&=-2\lambda^2(\hat C+\hat D),\\
\pt_t\gamma&=-2\lambda^2(\hat E+\hat F),\\
\pt_t\delta&=-2\lambda^2(\hat G-\hat H),
\end{cases}\nonumber
\eea
where $\alpha',\beta',\gamma',\delta',I,J,\dots,S,T$ are all constants. Expand the above system using Lemma \ref{generalf}, we get
\bea
\begin{cases}
\dfrac{\pt_t\alpha}{4\lambda^2}~=&4\alpha\beta\gamma-\alpha((M-N)^2-(S-T)^2)-2\beta(K^2-Q^2)-2\gamma(I^2-O^2)\\ &-2\alpha'(\alpha\delta'+\beta\gamma'+\gamma\beta'+\delta\alpha')+\clr_1,\\
\dfrac{\pt_t\beta}{4\lambda^2}~=&4\alpha\beta\delta-\beta((M+N)^2-(S+T)^2)-2\alpha(L^2-R^2)-2\delta(I^2-O^2)\\ &-2\beta'(-\alpha\delta'-\beta\gamma'+\gamma\beta'+\delta\alpha')+\clr_2,\\
\dfrac{\pt_t\gamma}{4\lambda^2}~=&4\alpha\gamma\delta-\gamma((M+N)^2-(S+T)^2)-2\alpha(J^2-P^2)-2\delta(K^2-Q^2)\\ &-2\gamma'(-\alpha\delta'+\beta\gamma'-\gamma\beta'+\delta\alpha')+\clr_3,\\
\dfrac{\pt_t\delta}{4\lambda^2}~=&4\beta\gamma\delta-\delta((M-N)^2-(S-T)^2)-2\beta(J^2-P^2)-2\gamma(L^2-R^2)\\ &-2\delta'(\alpha\delta'-\beta\gamma'-\gamma\beta'+\delta\alpha')+\clr_4,\\
\end{cases}\label{system}
\eea
where $\clr_1,\clr_2,\clr_3,\clr_4$ are constants.

The ODE system (\ref{system}) is rather complicated, and its long-time behavior depends highly on the choice of initial data. On one hand, $\clf$-harmonic 3-forms, as a special kind of stationary points of (\ref{system}), are long-time solutions. On the other hand, we shall prove that for the most desired initial data for the classical Type IIA flow introduced in \cite{fei2021b}, namely those primitive, closed, and positive 3-forms, the ODE system (\ref{system}) always develop a finite-time singularity.

As proved in \cite{fei2021b}, the primitiveness, closedness and positivity of 3-forms are all preserved under the Type IIA flow. In our setting, being closed and primitive is equivalent to
\[\alpha'=\beta'=\gamma'=\delta'=I=J=K=L=O=P=Q=R=S=T=0,\]
which further implies that
\[\clr_1=\clr_2=\clr_3=\clr_4=0.\]
Therefore for closed and primitive initial data, the Type IIA flow reduces to the following ODE system:
\bea
\begin{cases}\label{solvsystem}
\pt_t\alpha&=4\lambda^2\alpha(4\beta\gamma-(M-N)^2),\\
\pt_t\beta&=4\lambda^2\beta(4\alpha\delta-(M+N)^2),\\
\pt_t\gamma&=4\lambda^2\gamma(4\alpha\delta-(M+N)^2),\\
\pt_t\delta&=4\lambda^2\delta(4\beta\gamma-(M-N)^2),
\end{cases}
\eea
where $M,N$ are constants determined by the initial data.

Now let us turn to the ODE system (\ref{solvsystem}). We shall only consider the case where the initial data for $\varphi$ is positive, which is an open condition saying that the matrix
\[\begin{bmatrix}2\alpha\beta & & \alpha(N-M) & \beta(M+N) &  & \\
& 2\gamma\delta & \gamma(M+N) & \delta(M-N) & & \\
\alpha(N-M) & \gamma(M+N) & 2\alpha\gamma & & &\\
\beta(M+N) & \delta(M-N) & & 2\beta\delta & &\\
& & & & \alpha\delta+\beta\gamma-M^2 & \alpha\delta-\beta\gamma-MN\\
& & & & \alpha\delta-\beta\gamma-MN & \alpha\delta+\beta\gamma-N^2
\end{bmatrix}\]
is positive definite. Moreover, we know the positive definiteness of the above matrix is preserved under the Type IIA flow \cite{fei2021b}. By Sylvester's criterion, the positivity of the above matrix is equivalent to the following system of inequalities
\bea
\begin{cases}&\alpha,\beta,\gamma,\delta \textrm{ are all positive or all negative},\\
&\alpha\delta+\beta\gamma>M^2,\quad \alpha\delta+\beta\gamma>N^2,\\
&\dfrac{\clq(\varphi)}{16}=-4\alpha\beta\gamma\delta+\alpha\delta(M-N)^2+\beta\gamma(M+N)^2<0,
\end{cases}\label{initial}
\eea
which are all preserved under the Type IIA flow.

The behavior of the ODE system (\ref{solvsystem}) when $M=N=0$ is analyzed in \cite{fei2021b}. For the general case, we have:
\begin{prop}
For any value of $M$ and $N$, the ODE system (\ref{solvsystem}) has a finite-time singularity. Let $T<\infty$ be its maximal existence time, then the limit
\[\lim_{t\to T}\alpha(t)^{-1}\varphi(t)=\varphi_{\infty}\]
exists smoothly. Moreover, $\varphi_{\infty}$ defines a harmonic almost complex structure, as detailed in \cite{fei2021b}.
\end{prop}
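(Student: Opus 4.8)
The plan is to exploit a hidden symmetry of the system (\ref{solvsystem}) that collapses it to a planar Lotka--Volterra--type system, read off finite-time blow-up and the blow-up profile from conserved quantities, and then verify the limit is a harmonic almost complex structure by reducing to \cite{fei2021b}. First I would note that $\alpha,\delta$ share one logarithmic derivative and $\beta,\gamma$ share another: from (\ref{solvsystem}), $\pt_t(\log\alpha)=\pt_t(\log\delta)=4\lambda^2(4\beta\gamma-(M-N)^2)$ and $\pt_t(\log\beta)=\pt_t(\log\gamma)=4\lambda^2(4\alpha\delta-(M+N)^2)$. Hence the ratios $\alpha/\delta$ and $\beta/\gamma$ are constants of motion, say $c_1,c_2>0$ (positive by (\ref{initial}); WLOG $\alpha,\beta,\gamma,\delta>0$ since the flow is odd in $\varphi$). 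Setting $u=\alpha\delta$ and $v=\beta\gamma$ reduces (\ref{solvsystem}) to
\[\pt_t u=8\lambda^2 u\big(4v-(M-N)^2\big),\qquad \pt_t v=8\lambda^2 v\big(4u-(M+N)^2\big),\]
so the entire dynamics is carried by this planar system together with the frozen ratios $c_1,c_2$.

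Writing $p=(M-N)^2$ and $q=(M+N)^2$, I would next locate the trajectory and prove blow-up. The positivity hypothesis $\clq(\varphi)<0$ from (\ref{initial}) reads $4uv>pu+qv$; since $u,v>0$ this forces $v(4u-q)>pu\ge0$ and $u(4v-p)>qv\ge0$, i.e. $u>q/4$ and $v>p/4$. On this region both right-hand sides are strictly positive, so $u,v$ increase and the region is forward-invariant, whence the solution never escapes it. Once $u,v$ exceed $\max(p,q)$ one has $4u-q\ge3u$ and $4v-p\ge3v$, so $w=uv$ satisfies $\pt_t w=8\lambda^2 uv(4u+4v-p-q)\ge48\lambda^2 w^{3/2}$, which blows up at some finite $T<\infty$; this $T$ is the maximal existence time. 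I would also record the conserved quantity $H(u,v)=4u-4v-q\log u+p\log v$, which governs the profile.

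For the blow-up profile I would analyze $u/v$ via $H$. From $4(u-v)=H_0+q\log u-p\log v$ with $u,v\to\infty$, the cases $u/v\to\infty$ or $0$ are impossible (the left side would be of order $u$ or $v$ while the right side is only logarithmic), so $u/v$ is bounded away from $0$ and $\infty$; dividing by $v$ then gives $u/v\to1$. Recovering $\alpha=\sqrt{c_1u}$, $\delta=\sqrt{u/c_1}$, $\beta=\sqrt{c_2v}$, $\gamma=\sqrt{v/c_2}$, I compute the coefficients of $\alpha(t)^{-1}\varphi(t)$: the term $(Me^5+Ne^6)(e^{12}-e^{34})$ is killed by $1/\alpha\to0$, the ratio $\delta/\alpha=1/c_1$ is constant, and $\beta/\alpha=\sqrt{c_2/c_1}\,\sqrt{v/u}$, $\gamma/\alpha=(c_1c_2)^{-1/2}\sqrt{v/u}$ converge since $v/u\to1$. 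Thus
\[\varphi_\infty=(e^{135}+e^{136})+\sqrt{\tfrac{c_2}{c_1}}(e^{145}-e^{146})+\tfrac{1}{\sqrt{c_1c_2}}(e^{235}-e^{236})-\tfrac{1}{c_1}(e^{245}+e^{246})\]
exists as a genuine left-invariant $3$-form, and the convergence is smooth because each coefficient is a smooth function of $(u,v)$ with a finite limit.

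Finally I would check harmonicity. For $\varphi_\infty$ the surviving parameters obey $\alpha_\infty\delta_\infty=\beta_\infty\gamma_\infty=1/c_1$ with $M=N=0$, so the $\clq$ formula recorded in (\ref{initial}) gives $\clq(\varphi_\infty)/16=-4(1/c_1)^2<0$; hence $\clj(\varphi_\infty)$ is a genuine almost complex structure. The relation $\alpha_\infty\delta_\infty=\beta_\infty\gamma_\infty$ is precisely the self-similarity condition making $\varphi_\infty$ a soliton of the $M=N=0$ flow, which is the configuration identified in \cite{fei2021b} as a harmonic almost complex structure. I expect the genuinely nontrivial steps to be the asymptotic analysis $u/v\to1$ (needed for the \emph{smooth} existence of $\varphi_\infty$, not merely boundedness of the rescaled flow) and the precise identification of $\varphi_\infty$ with a harmonic almost complex structure in the sense of \cite{fei2021b}; the blow-up itself is routine once the planar reduction and the sign analysis from $\clq<0$ are in place.
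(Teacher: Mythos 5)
Your argument is correct and, at the level of milestones, parallel to the paper's (freeze the ratios $\alpha/\delta$ and $\beta/\gamma$, reduce to a planar system in $u\propto\alpha\delta$ and $v\propto\beta\gamma$, prove finite-time blow-up, prove $u/v\to1$, and conclude $\alpha_\infty\delta_\infty=\beta_\infty\gamma_\infty$ so that \cite{fei2021b} gives harmonicity), but both hard steps are handled by genuinely different means. For blow-up the paper compares the reduced system (\ref{uv}) with the symmetrized system (\ref{uvnormal}), which it solves in closed form and thereby also extracts an explicit upper bound $T'$ on the singular time; you instead run a Riccati-type inequality $\pt_t(uv)\gtrsim(uv)^{3/2}$, which is shorter but gives no such bound, and your write-up has one elided step: you invoke ``once $u,v$ exceed $\max\{p,q\}$'' without checking that this happens, so you should add that $u$ and $v$ are increasing and cannot converge to finite limits (their derivatives would have strictly positive limits since $v>p/4$, $u>q/4$ persist), hence both tend to $+\infty$ on the maximal interval if that interval were infinite. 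The more substantial divergence is the asymptotics $u/v\to1$: the paper derives it from a differential inequality for $r=(u-v)^2/u^2$ combined with an integrating-factor estimate against $\int v$, whereas you use the first integral $H=4u-4v-q\log u+p\log v$ of the Lotka--Volterra-type system (which is indeed conserved). Your route is cleaner: conservation of $H$ gives $u-v=O(\log u)$ directly, which simultaneously shows that $u$ and $v$ must blow up together, gives $u/v\to1$ upon dividing by $v$, and even yields the strengthened decay $(u-v)^2/u\to0$ that the paper only records as a remark after additional work. The final identification of $\varphi_\infty$ with a harmonic almost complex structure is, in both treatments, an appeal to \cite{fei2021b} through the relation $\alpha_\infty\delta_\infty-\beta_\infty\gamma_\infty=0$, so there is no difference there.
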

\begin{proof}
It is obvious that $\alpha/\delta$ and $\beta/\gamma$ are positive constants along the flow.

Without loss of generality, we may assume that $\alpha$, $\beta$, $\gamma$, and $\delta$ are all positive, otherwise we may consider the evolution equation for $-\alpha$, $-\beta$, $-\gamma$, and $-\delta$ instead. Since (\ref{initial}) holds along the flow, we know that
\[4\alpha\beta\gamma\delta>\alpha\delta(M-N)^2,\quad 4\alpha\beta\gamma\delta>\beta\gamma(M+N)^2,\]
so the right hand side of (\ref{solvsystem}) are all positive. It follows that there exists a small positive number $c$ depending on the initial data such that $\p_tf\geq cf$ holds when $f=\alpha,\beta,\gamma$ or $\delta$, therefore all of $\alpha$, $\beta$, $\gamma$, and $\delta$ have at least exponential growth.

Now let $u=4\alpha\delta$ and $v=4\beta\gamma$. The pair $(u,v)$ satisfies
\bea
\begin{cases}
&\p_tu=2\lambda^2u(v-(M-N)^2),\\
&\p_tv=2\lambda^2v(u-(M+N)^2),
\end{cases}\label{uv}
\eea
together with
\bea
u,v>0\textrm{ and } uv>u(M-N)^2+v(M+N)^2.\label{mother}
\eea
In fact, the last two lines in the system of inequalities (\ref{initial}) can be derived from (\ref{mother}) as follows. First we deduce that $u>(M+N)^2$ and $v>(M-N)^2$. So we get
\[\left(\frac{u+v-2M^2-2N^2}{2}\right)^2\geq (u-(M+N)^2)(v-(M-N)^2)>(M^2-N^2)^2.\]
It follows that 
\[u+v>2M^2+2N^2+2|M^2-N^2|=4\max\{M^2,N^2\}.\]
Let $S=\max\{(M+N)^2,(M-N)^2\}$ and we shall compare the ODE system (\ref{uv}) with the following ODE system
\bea
\begin{cases}
&\p_tu=2\lambda^2u(v-S),\\
&\p_tv=2\lambda^2v(u-S).
\end{cases}\label{uvnormal}
\eea
By the ODE comparison theorem, if we can show that (\ref{uvnormal}) blows up in finite time, then (\ref{uv}) must blow up in even shorter time. It turns out that (\ref{uvnormal}) can be solved explicitly as follows. By taking the difference of the two equations in (\ref{uvnormal}), we get
\[\p_t(u-v)+2\lambda^2S(u-v)=0.\]
Therefore there exists a constant $C_0=u_0-v_0$ such that
\[u-v=C_0e^{-2\lambda^2St}.\]
Plug it back in (\ref{uvnormal}), we get
\bea
\p_t u=2\lambda^2u(u-C_0e^{-2\lambda^2St}-S).\label{u}
\eea
Let $w=e^{2\lambda^2St}u$, then from (\ref{u}) we get
\[\frac{\p_tw}{w(w-C_0)}=2\lambda^2e^{-2\lambda^2St}.\]
It follows that
\[w=\frac{C_0}{1-\frac{w_0-C_0}{w_0}e^{-\frac{C_0}{S}(e^{-2\lambda^2St}-1)}}.\]
As a consequence, we see that $w$ blows up at finite time $T'$, where
\[\begin{split}T'&=-\frac{1}{2\lambda^2S}\log\left[1+\frac{S}{C_0}\log\frac{w_0-C_0}{w_0}\right]\\
&=-\frac{1}{2\lambda^2S}\log\left[1+\frac{S}{\alpha_0\delta_0-\beta_0\gamma_0}\log\left(1 -\frac{\alpha_0\delta_0-\beta_0\gamma_0}{\alpha_0\delta_0}\right)\right].\end{split}\]
Now we can turn back to the ODE system (\ref{uv}), for which we now know that it blows up at a finite time $T$ and we have an estimate $T<T'$. Without loss of generality, we may assume $v(t)$ blows up at time $T$, i.e. $\lim_{t\to T}v(t)=+\infty$. Now we shall show that $u(t)$ also blows up at timer $T$.

By direct computation, we have
\[\p_t\left(\frac{(u-v)^2}{u^2}\right)=\frac{2\lambda^2}{u^2}\left[(u-v)((M+N)^2v-(M-N)^2u)-(u-v)^2(v-(M-N)^2)\right].\]
Firstly, it is easy to show that for any $\epsilon>0$, there exists a positive constant, whose optimal value is $C_\epsilon=\dfrac{8M^2N^2}{\epsilon}-(M+N)^2$, such that
\[(u-v)((M+N)^2v-(M-N)^2u)\leq C_\epsilon(u-v)^2+\epsilon u^2.\]
As $v(t)$ is increasing and $\lim_{t\to T}v(t)=+\infty$, there exists a time $T_1<T$ such that for any $T_1\leq t < T$, we have
\[v(t)-(M-N)^2\geq \frac{v(t)}{2}+C_\epsilon.\]
Therefore on the time interval $[T_1,T)$, we have
\bea
\p_t\left(\frac{(u-v)^2}{u^2}\right)\leq\frac{2\lambda^2}{u^2}\left[\epsilon u^2-\frac{v}{2}(u-v)^2\right]\leq 2\lambda^2\epsilon.\label{est}
\eea
Integrating it from $T_1$ to $t$ we see that
\[\left(\frac{u(t)-v(t)}{u(t)}\right)^2\leq C^2\]
on $[T_1,T)$, where the constant $C$ depends on $u(T_1)$, $v(T_1)$ and $\epsilon$. Therefore we know that on $[T_1,T)$, we have the estimate
\[v(t)\leq (1+C)u(t),\]
hence $\lim_{t\to\infty}u(t)=+\infty$.

Notice that the evolution equation of $u(t)$ can be written as
\[\frac{1}{2\lambda^2}\frac{\p_tu}{u}=v-(M-N)^2.\]
Integrating it, we get
\[\int_0^tv(s)\ud s=t(M-N)^2+\frac{1}{2\lambda^2}\log\frac{u(t)}{u(0)}\]
for any $0\leq t< T$. As we have proved that $\lim_{t\to T}u(t)=+\infty$, we can conclude
\[\lim_{t\to T}\int_0^tv(s)\ud s=+\infty.\]
Write $r=\dfrac{(u-v)^2}{u^2}$, then the first half of (\ref{est}) can be rewritten as
\[\p_tr+\lambda^2vr\leq 2\lambda^2\epsilon.\]
It follows that on $[T_1,T)$ we have
\[\p_t\left(e^{\lambda^2\int_0^tv(s)\ud s}r(t)\right)\leq 2\lambda^2\epsilon e^{\lambda^2\int_0^tv(s)\ud s}\leq \frac{2\epsilon}{v(T_1)}\lambda^2v(t)e^{\lambda^2\int_0^tv(s)\ud s}.\]
Integrating both sides, we get
\[e^{\lambda^2\int_0^tv(s)\ud s}r(t)\leq \frac{2\epsilon}{v(T_1)}\left(e^{\lambda^2\int_0^tv(s)\ud s}-e^{\lambda^2\int_0^{T_1}v(s)\ud s}\right)+e^{\lambda^2\int_0^{T_1}v(s)\ud s}r(T_1).\]
Divide both sides by $e^{\lambda^2\int_0^tv(s)\ud s}$ and let $t\to T$, we get
\[\lim_{t\to T}r(t)\leq\frac{2\epsilon}{v(T_1)}\]
for any $\epsilon>0$, hence we have proved
\bea
\lim_{t\to T}\frac{(u(t)-v(t))^2}{u^2(t)}=0,\label{est2}
\eea
or equivalently
\bea
\lim_{t\to T}\frac{u(t)}{v(t)}=1.\label{est3}
\eea
As $u=4\alpha\delta$ and $v=4\beta\gamma$ with $\alpha/\delta$ and $\beta/\gamma$ are positive constants, the limit (\ref{est3}) implies that
\[\begin{split}
&\lim_{t\to T}\frac{\beta(t)}{\alpha(t)}=\beta_\infty>0,\\
&\lim_{t\to T}\frac{\gamma(t)}{\alpha(t)}=\gamma_\infty>0,\\
&\lim_{t\to T}\frac{\delta(t)}{\alpha(t)}=\delta_\infty>0.
\end{split}\]
Let $\alpha_\infty=1$. Since $M$ and $N$ are constants, we conclude that
\[\lim_{t\to T}\frac{\varphi(t)}{\alpha(t)}=\alpha_\infty(e^{135}+e^{136})+\beta_\infty(e^{145}-e^{146})+\gamma_\infty(e^{235}-e^{236})-\delta_\infty(e^{245}+e^{246})\]
exists smoothly. Moreover, the estimate (\ref{est2}) implies that
\[\alpha_\infty\delta_\infty-\beta_\infty\gamma_\infty=0,\]
so from \cite{fei2021b} we know that the almost complex structure associated to $\varphi_\infty$ is harmonic.
\end{proof}
\begin{rmk}
The limit (\ref{est2}) can actually be strengthen to
\[\lim_{t\to T}\frac{(u(t)-v(t))^2}{u(t)}=0.\]
\end{rmk}    
 
\bibliographystyle{alpha}

\bibliography{C:/Users/benja/Dropbox/Documents/Source}

\end{document}